\def\NAT@def@citea{\def\@citea{\NAT@separator}}% Suppress spaces between citations using natbib.sty
\theoremstyle{plain}% Theorem-like structures provided by amsthm.sty
\newtheorem{theorem}{Theorem}[section]
\newtheorem{lemma}[theorem]{Lemma}
\newtheorem{proposition}[theorem]{Proposition}
\theoremstyle{definition}
\newtheorem{example}[theorem]{Example}
\theoremstyle{remark}
\newtheorem{remark}{Remark}
\begin{document}
\title{SOR-like iteration and FPI are consistent when they are equipped with certain optimal iterative parameters}

\author{
\name{Jiayu Liu\textsuperscript{a}\thanks{Email address: 1977078576@qq.com.} and Tingting Luo\textsuperscript{a}\thanks{Email address: 610592494@qq.com.} and Cairong Chen\textsuperscript{a}\thanks{Corresponding author. Email address: cairongchen@fjnu.edu.cn.} and Deren Han\textsuperscript{b}\thanks{Email address: handr@buaa.eud.cn.}}
\affil{\textsuperscript{a}School of Mathematics and Statistics \& Key Laboratory of Analytical Mathematics and Applications (Ministry of Education) \& Fujian Provincial Key Laboratory of Statistics and Artificial Intelligence, Fujian Normal University, Fuzhou, 350117, P.R. China}
\affil{\textsuperscript{b}LMIB of the Ministry of Education, School of Mathematical Sciences, Beihang University, Beijing, 100191, P.R. China}
}
\maketitle

\begin{abstract}
Two common methods for solving absolute value equations (AVE) are SOR-like iteration method and fixed point iteration (FPI) method. In this paper, novel convergence analysis, which result wider convergence range, of the SOR-like iteration and the FPI are given. Based on the new analysis, a new optimal iterative parameter with a analytical form is obtained for the SOR-like iteration. In addition, an optimal iterative parameter with a analytical form is also obtained for FPI. Surprisingly, the SOR-like iteration and the FPI are the same whenever they are equipped with our optimal iterative parameters. As a by product, we give two new constructive proof for a well known sufficient condition such that AVE has a unique solution for any right hand side. Numerical results demonstrate our claims.
\end{abstract}

\begin{keywords}
Absolute value equations; iterative method; convergence domain; optimal iteration parameter
\end{keywords}

\section{Introduction}\label{sec:intro}
We consider  absolute value equations (AVE) of the form
\begin{equation}\label{eq:ave}
Ax - | x | = b,
\end{equation}
where $A\in\mathbb{R}^{n\times n}$, $b\in\mathbb{R}^n$, and $|x|\in\mathbb{R}^n$ denotes the entrywise absolute value of the unknown vector $x\in\mathbb{R}^n$. AVE \eqref{eq:ave} can be regarded as a special case of the general absolute value equation (GAVE)
\begin{equation}\label{eq:gave}
Cx - D|x| = e,
\end{equation}
where $C,D\in\mathbb{R}^{m\times n}$ and $e\in \mathbb{R}^m$. It was known that determining the existence of a solution to the general GAVE is NP-hard \cite{mang2007a}, and if it has a solution, determining whether the GAVE has a unique solution or multiple solutions is NP-complete \cite{prok2009}. For further investigation on GAVE, one can see \cite{hlad2018,love2013,mezz2020,rohn2009a,rohf2014,wush2021}. Over the past two decades, AVE \eqref{eq:ave} has garnered significant attention in the community of numerical optimization since it is closely related to many mathematical programming problems, which include linear complementarity problems (LCP) \cite{huhu2010,mang2014,mame2006,prok2009}. In addition, AVE \eqref{eq:ave} also arises from the characterization of certain solutions to the system of linear interval equations \cite{rohn1989,rohn2004}. Recently, a transform function based on the underdetermined GAVE~\eqref{eq:gave} is used to improve the security of the cancellable biometric system \cite{dnhk2023}.

Given these diverse applications and theoretical significance, developing efficient numerical methods for solving AVE \eqref{eq:ave} remains as an active research topic. In recent years, there has been numerous algorithms for solving AVE \eqref{eq:ave}. For example, Newton-type iteration methods \cite{mang2009a,lilw2018,bcfp2016,wacc2019}, iterative methods based on matrix splitting \cite{lild2022,kema2017,edhs2017}, concave minimization approaches \cite{mang2007b,zahl2021}, methods based on neurodynamic models \cite{cyyh2021,yzch2024}, and others; see, e.g.,  \cite{ke2020,alct2023,chyh2023,xiqh2024,soso2023,bcfp2016,maer2018,abhm2018,sayc2018,tazh2019}.

The goal of this paper is to revisit the convergence conditions and optimal iterative parameters for two of the above-mentioned algorithms, i.e., the SOR-like iteration method \cite{kema2017} and the fixed point iteration (FPI) method \cite{ke2020}. In the following, we briefly review these two methods.

Let $y = |x|$, AVE~\eqref{eq:ave} is equivalent to
\begin{equation}\label{eq:ave-eq}
\mathcal{A}z := \begin{bmatrix} A &-I\\ -\mathcal{D}(x) & I\end{bmatrix} \begin{bmatrix} x\\ y\end{bmatrix} = \begin{bmatrix} b\\ 0\end{bmatrix} := c,
\end{equation}
where $\mathcal{D}(x) = {\rm diag}({\rm sign}(x))$. By splitting
$$
\omega\mathcal{A} = \begin{bmatrix} A &0\\ -\omega \mathcal{D}(x) & I\end{bmatrix} - \begin{bmatrix} (1-\omega)A &\omega I\\0 & (1-\omega)I\end{bmatrix}
$$
with $\omega> 0$ is the iterative parameter, Ke and Ma \cite{kema2017} proposed the following SOR-like iteration for solving AVE~\eqref{eq:ave}:
\begin{equation*}
\begin{bmatrix} A &0\\ -\omega \mathcal{D}(x^{(k+1)}) & I\end{bmatrix} \begin{bmatrix} x^{(k+1)}\\ y^{(k+1)}\end{bmatrix} = \begin{bmatrix} (1-\omega)A &\omega I\\0 & (1-\omega)I\end{bmatrix}\begin{bmatrix} x^{(k)}\\ y^{(k)}\end{bmatrix} + \begin{bmatrix} \omega b\\ 0\end{bmatrix}.
\end{equation*}
The SOR-like iteration method is described in \Cref{alg:SOR}.
\begin{algorithm}[htp]
\caption{SOR-like iteration method for solving AVE \eqref{eq:ave} \cite{kema2017}.}\label{alg:SOR}
Let $A\in \mathbb{R}^{n\times n}$ be a nonsingular matrix and $b\in \mathbb{R}^{n}$. Given the initial vectors $x^{(0)}\in \mathbb{R}^{n}$ and $y^{(0)}\in \mathbb{R}^{n}$, for $k=0,1,2,\cdots$ until the iteration sequence $\left\{(x^{(k)},y^{(k)})\right\}_{k=0}^\infty$ is convergent, compute
\begin{eqnarray}\label{eq:sor}
\begin{cases}
x^{(k+1)}=(1-\omega)x^{(k)}+\omega A^{-1}(y^{(k)}+b),\\
y^{(k+1)}=(1-\omega)y^{(k)}+\omega |x^{(k+1)}|,
\end{cases}
\end{eqnarray}
where  $\omega > 0$ is the iterative parameter.
\end{algorithm}

Hereafter, based on \eqref{eq:ave-eq} again, Ke \cite{ke2020} proposed the following FPI method (see \Cref{alg:FPI}) for solving AVE~\eqref{eq:ave}.
\begin{algorithm}[htp]
\caption{FPI method for solving AVE \eqref{eq:ave} \cite{ke2020}}\label{alg:FPI}
Let $A\in \mathbb{R}^{n\times n}$ be a nonsingular matrix and $b\in \mathbb{R}^{n}$. Given the initial vectors $x^{(0)}\in \mathbb{R}^{n}$ and $y^{(0)}\in \mathbb{R}^{n}$, for $k=0,1,2,\cdots$ until the iteration sequence $\left\{(x^{(k)},y^{(k)})\right\}_{k=0}^\infty$ is convergent, compute
 \begin{eqnarray}\label{eq:fpi}
\begin{cases}
x^{(k+1)}=A^{-1}(y^{(k)}+b),\\
y^{(k+1)}=(1-\tau)y^{(k)}+\tau |x^{(k+1)}|,
\end{cases}
\end{eqnarray}
where $\tau>0$ is the iterative parameter.
\end{algorithm}

Let $(x_*, y_*)$ be the solution pair of the nonlinear equation \eqref{eq:ave-eq} and define
$$
e_k^x = x_* - x^{(k)}, e_k^y = y_* - y^{(k)}.
$$
Then we can review the following results.

For the SOR-like iteration method, Ke and Ma obtain the following theorem.
\begin{theorem}[{\cite[Theorem 2.1]{kema2017}}]\label{thm:kema}
Assume that $A \in \mathbb{R}^{n\times n}$ is a nonsingular matrix and $b\in \mathbb{R}^{n}$. Denote
$$ \nu=\|A^{-1}\|_2, \quad a=|1-\omega|\quad \text{and}\quad d=\omega^2\nu. $$
For the sequence $\{(x^{(k)},y^{(k)})\}$ generated by \eqref{eq:sor}, if
\begin{equation}\label{eq:cond1}
0<\omega< 2  \qquad \text{and} \qquad  a^4-3a^2 -2ad- 2d^2 +1 >0,
\end{equation}
the following inequality
\begin{equation*}%\label{eq:res}
\| |(e_{k+1}^x,e_{k+1}^y)| \|_{\omega} < \| |(e_k^x,e_k^y) |\|_{\omega}
\end{equation*}
holds for $ k=0,1,2,\cdots $. Here the norm $\| |\cdot|\|_{\omega}$ is defined by
$$\| |(e_k^x,e_k^y) |\|_{\omega}:=\sqrt {\|e_k^x \|_2^2+\omega ^{-2}\|e_k^y \|_2^2 }.$$
\end{theorem}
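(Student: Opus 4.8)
The plan is to reduce the assertion to a contraction estimate for a fixed nonnegative $2\times2$ matrix acting on the pair of error norms $(\|e_k^x\|_2,\|e_k^y\|_2)$. Since $(x_*,y_*)$ solves \eqref{eq:ave-eq} we have $Ax_*=y_*+b$ and $y_*=|x_*|$, so $(x_*,y_*)$ is a fixed point of the two maps in \eqref{eq:sor}; subtracting \eqref{eq:sor} from these fixed-point identities gives
\begin{equation*}
e_{k+1}^x=(1-\omega)e_k^x+\omega A^{-1}e_k^y,\qquad e_{k+1}^y=(1-\omega)e_k^y+\omega\big(|x_*|-|x^{(k+1)}|\big).
\end{equation*}
Taking $\|\cdot\|_2$, using the entrywise bound $\big\||p|-|q|\big\|_2\le\|p-q\|_2$ on the second identity, and writing $u_k:=\|e_k^x\|_2$ and $v_k:=\|e_k^y\|_2$, I obtain
\begin{equation*}
u_{k+1}\le a\,u_k+\omega\nu\,v_k,\qquad v_{k+1}\le a\,v_k+\omega\,u_{k+1}\le \omega a\,u_k+(a+d)\,v_k,
\end{equation*}
where the last inequality substitutes the first bound and uses $d=\omega^2\nu$.

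Next I would rescale the second coordinate: set $w_k:=\omega^{-1}v_k$, so that $\| |(e_k^x,e_k^y)| \|_{\omega}=\sqrt{u_k^2+w_k^2}$ is exactly the Euclidean norm of $(u_k,w_k)^{\top}$, and the two scalar inequalities above become the componentwise vector inequality
\begin{equation*}
\begin{bmatrix}u_{k+1}\\ w_{k+1}\end{bmatrix}\le M\begin{bmatrix}u_k\\ w_k\end{bmatrix},\qquad M:=\begin{bmatrix}a & d\\ a & a+d\end{bmatrix},
\end{equation*}
with all quantities nonnegative. Because $\|\cdot\|_2$ is monotone on the nonnegative orthant, this yields $\|(u_{k+1},w_{k+1})^{\top}\|_2\le\|M(u_k,w_k)^{\top}\|_2\le\|M\|_2\,\|(u_k,w_k)^{\top}\|_2$, so the theorem reduces to proving $\|M\|_2<1$, equivalently that $I-M^{\top}M$ is positive definite.

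Finally I would apply Sylvester's criterion to the symmetric matrix $I-M^{\top}M$. A direct computation gives
\begin{equation*}
I-M^{\top}M=\begin{bmatrix}1-2a^2 & -(a^2+2ad)\\ -(a^2+2ad) & 1-a^2-2ad-2d^2\end{bmatrix},\qquad \det\big(I-M^{\top}M\big)=a^4-3a^2-2ad-2d^2+1,
\end{equation*}
so the determinant condition is precisely the second inequality in \eqref{eq:cond1}. For the $(1,1)$ entry, note that since $A$ is nonsingular we have $d=\omega^2\nu>0$, hence $a^4-3a^2+1=\det(I-M^{\top}M)+2ad+2d^2>0$; together with $a=|1-\omega|<1$ (which is what $0<\omega<2$ guarantees) this forces $a^2<\tfrac{3-\sqrt5}{2}<\tfrac12$, so $1-2a^2>0$. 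Therefore $\|M\|_2<1$, and consequently $\| |(e_{k+1}^x,e_{k+1}^y)| \|_{\omega}=\|(u_{k+1},w_{k+1})^{\top}\|_2<\|(u_k,w_k)^{\top}\|_2=\| |(e_k^x,e_k^y)| \|_{\omega}$ whenever $(e_k^x,e_k^y)\neq 0$ (if the error vanishes the iteration has already reached the solution).

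The step that I expect to demand the most care is the reduction in the second paragraph — recognizing that the weighted norm $\| |\cdot| \|_{\omega}$ is the plain Euclidean norm of the rescaled error pair $(u_k,w_k)^{\top}$, and justifying the monotone passage from the componentwise vector inequality to the scalar norm inequality. The determinant algebra and the short estimate reconciling $1-2a^2>0$ with \eqref{eq:cond1} are then essentially mechanical.
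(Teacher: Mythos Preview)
Your proof is correct and follows essentially the same route as the original argument of Ke and Ma. The paper itself does not reprove this cited theorem, but the mechanism it records --- the error bound \eqref{eq:errsor} with the matrix $T_\nu(\omega)$ measured in the weighted norm $\|\cdot\|_A$, $A=\mathrm{diag}(1,\omega^{-2})$ --- is exactly your construction in disguise: your rescaled matrix satisfies $M=A^{1/2}T_\nu(\omega)A^{-1/2}$, so $\|M\|_2=\|T_\nu(\omega)\|_A$, and your Sylvester computation recovering $\det(I-M^{\top}M)=a^4-3a^2-2ad-2d^2+1$ together with the auxiliary estimate $1-2a^2>0$ is precisely how condition \eqref{eq:cond1} arises in the original proof.
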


Recently, Chen et al. \cite{chyh2024} revisited the convergence condition \eqref{eq:cond1} of the SOR-like iteration method and determined the optimal iteration parameter which minimizes $\|T_{\nu}(\omega)\|_A$ with
$$T_\nu(\omega) = \begin{bmatrix} |1-\omega| & \omega\nu \\  \omega |1-\omega|  &  |1-\omega| +\omega^2\nu \end{bmatrix}$$
and $A = \begin{bmatrix} 1 & 0\\ 0 &\frac{1}{\omega^2}\end{bmatrix}$ such that
\begin{equation}\label{eq:errsor}
0\le \| (\|e_{k+1}^x\|_2,\|e_{k+1}^y\|_2) \|_A \le \|T_\nu(\omega) \|_A \cdot \| (\|e_k^x\|_2,\|e_k^y\|_2) \|_A.
\end{equation}
Here, $\|x\|_A = \sqrt{x^\top Ax}$ and $\|X\|_A = \|A^{\frac{1}{2}}XA^{-\frac{1}{2}}\|_2$.
From \eqref{eq:errsor}, for the sequence $\{(\|e_x^k\|_2, \|e^k_y\|_2)\}$, $\|T_{\nu}(\omega)\|_A$ is an upper bound of the linear convergence factor for the SOR-like iteration method in terms of the metric $\|\cdot \|_A$. However, the metric $\|\cdot \|_A$ is $\omega$-dependent and the resulting optimal iterative parameter doesn't have a analytical form (see \eqref{eq:opt}). This brings out an interesting question on finding an optimal iterative parameter with a analytical form. To this end, we reanalysis the convergence of the SOR-like iteration method without using the metric $\|\cdot \|_A$.

For the FPI method, Ke proposed the following theorem.

\begin{theorem}[{\cite[Theorem 2.1]{ke2020}}]\label{thm:kefpi} Assume that $A \in \mathbb{R}^{n\times n}$ is a nonsingular matrix and $b\in \mathbb{R}^{n}$. Denote
$$\nu=\|A^{-1}\|{_2}\quad \text{and}\quad E^{(k+1)}=\begin{bmatrix}\begin{array}{c} \|e_{k+1}^x\|_2\\ \|e_{k+1}^y\|_2\end{array}\end{bmatrix}.$$
For the sequence $\{(x^{(k)},y^{(k)})\}$ generated by \eqref{eq:fpi}, if
\begin{equation}\label{eq:cfpi}
0<\nu<  \frac{\sqrt{2}}{2} \qquad \text{and}  \qquad  \frac{1- \sqrt{1- \nu^2}}{1- \nu} < \tau <  \frac{1+\sqrt{1-\nu^2}}{1+\nu},
\end{equation}
$\|E^{(k+1)}\|_2< \|E^{(k)}\|_2$ for all $k=0,1,2,\cdots$.
\end{theorem}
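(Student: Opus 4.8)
The plan is to carry out the error‑propagation analysis behind \eqref{eq:fpi} directly in the Euclidean norm, without any $\omega$‑ or $\tau$‑dependent metric. Write the solution pair of \eqref{eq:ave-eq} as $x_* = A^{-1}(y_*+b)$ with $y_* = |x_*|$, so that also $y_* = (1-\tau)y_* + \tau|x_*|$. Subtracting these two identities from the two lines of \eqref{eq:fpi} gives the coupled error recursion
\begin{equation*}
e_{k+1}^x = A^{-1}e_k^y, \qquad e_{k+1}^y = (1-\tau)\,e_k^y + \tau\bigl(|x_*| - |x^{(k+1)}|\bigr).
\end{equation*}
Applying the entrywise reverse triangle inequality $\bigl|\,|x_*| - |x^{(k+1)}|\,\bigr| \le |x_* - x^{(k+1)}| = |e_{k+1}^x|$ and then the $2$‑norm, together with $\|A^{-1}\|_2 = \nu$, yields $\|e_{k+1}^x\|_2 \le \nu\|e_k^y\|_2$ and $\|e_{k+1}^y\|_2 \le |1-\tau|\,\|e_k^y\|_2 + \tau\|e_{k+1}^x\|_2 \le \bigl(|1-\tau| + \tau\nu\bigr)\|e_k^y\|_2$. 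Hence $E^{(k+1)} \le W\,E^{(k)}$ holds componentwise with the nonnegative matrix $W = \begin{bmatrix} 0 & \nu \\ 0 & |1-\tau| + \tau\nu \end{bmatrix}$.

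Since $W \ge 0$, $E^{(k)} \ge 0$ and the Euclidean norm is monotone on the nonnegative orthant, this gives $\|E^{(k+1)}\|_2 \le \|W E^{(k)}\|_2 \le \|W\|_2\,\|E^{(k)}\|_2$, so it is enough to show that \eqref{eq:cfpi} is equivalent to $\|W\|_2 < 1$. Computing $W^\top W$ (or applying Cauchy--Schwarz to the single nonzero row of $W^\top$) gives $\|W\|_2 = \sqrt{\nu^2 + \bigl(|1-\tau| + \tau\nu\bigr)^2}$, so $\|W\|_2 < 1$ is equivalent to the two requirements $\nu < 1$ and $|1-\tau| + \tau\nu < \sqrt{1-\nu^2}$.

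It remains to turn the last inequality into an interval for $\tau$. I would split into the cases $0 < \tau \le 1$, where $|1-\tau| = 1-\tau$ and the inequality rearranges to $\tau(1-\nu) > 1 - \sqrt{1-\nu^2}$, i.e.\ $\tau > \frac{1-\sqrt{1-\nu^2}}{1-\nu}$, and $\tau > 1$, where $|1-\tau| = \tau-1$ and it rearranges to $\tau(1+\nu) < 1 + \sqrt{1-\nu^2}$, i.e.\ $\tau < \frac{1+\sqrt{1-\nu^2}}{1+\nu}$; together these are exactly the two‑sided bound in \eqref{eq:cfpi}. This interval is nonempty iff $\frac{1-\sqrt{1-\nu^2}}{1-\nu} < \frac{1+\sqrt{1-\nu^2}}{1+\nu}$; multiplying through by the positive quantity $1-\nu^2$ and cancelling the common terms, the inequality collapses to $2\nu < 2\sqrt{1-\nu^2}$, i.e.\ $\nu^2 < \tfrac12$, which is the first condition in \eqref{eq:cfpi} (the constraint $\nu<1$ from the previous paragraph is then automatic). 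Under \eqref{eq:cfpi} we conclude $\|W\|_2 < 1$, hence $\|E^{(k+1)}\|_2 \le \|W\|_2\,\|E^{(k)}\|_2 < \|E^{(k)}\|_2$ for every $k$ with $E^{(k)}\neq 0$ (the remaining case $E^{(k)}=0$ being the situation in which the iteration has already reached the solution).

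The argument involves no deep step; the part needing care is the bookkeeping in the case analysis on the sign of $1-\tau$ and the verification that each squaring/cross‑multiplication is over nonnegative quantities (guaranteed once $\nu<1$), so that no spurious roots are introduced and the endpoint expressions reduce to exactly the constants in \eqref{eq:cfpi}. I would also be careful that the matrix $W$ dominates $E^{(k+1)}$ \emph{entrywise} rather than merely in spectral radius --- this is what makes the monotone‑norm inequality $\|E^{(k+1)}\|_2 \le \|W\|_2\|E^{(k)}\|_2$ legitimate --- which is why the reverse triangle inequality must be applied componentwise before any norm is taken.
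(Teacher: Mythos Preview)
Your argument is correct, but note that the paper does not actually prove this theorem: it is quoted verbatim from \cite{ke2020} as background, with no proof supplied, so there is no ``paper's own proof'' to compare against. What the paper \emph{does} prove is the sharper \Cref{thm:fpi}, and it is worth observing that your matrix $W$ coincides with the paper's matrix $U$ in \eqref{eq:u}; the only difference in the two analyses is that you enforce $\|U\|_2<1$ (which reproduces Ke's range \eqref{eq:cfpi}), whereas the paper enforces the weaker condition $\rho(U)<1$ and obtains the larger range \eqref{eq:con-fpi}. In that sense your derivation is exactly the ``old'' argument the paper is improving upon, and it makes transparent why the gap $\nu\in[\tfrac{\sqrt2}{2},1)$ appears: it comes from bounding by the operator $2$-norm of a rank-one nonnegative matrix rather than by its spectral radius $|1-\tau|+\tau\nu$.
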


For AVE~\eqref{eq:ave}, the following \Cref{pro:us} reveals a sufficient condition such that AVE~\eqref{eq:ave} has a unique solution for any $b \in \mathbb{R}^{n}$. However, in \eqref{eq:cfpi}, $\nu\in (0, \frac{\sqrt{2}}{2})$. There exists a gap between $(0, \frac{\sqrt{2}}{2})$ and $(0, 1)$. In order to theoretically fill this gap, Yu et al. \cite{yuch2022} modified the FPI by introducing an auxiliary matrix. However, the optimal iterative parameter of the FPI method is still lack in the literature. This motivates us to give a new convergence analysis of the FPI method which not only can fill the above-mentioned gap without modifying the original FPI but also can shine the light into determining the optimal iterative parameter.

\begin{proposition}[\cite{mame2006}]\label{pro:us}
Assume that $A \in \mathbb{R}^{n\times n}$ is invertible. If $\|A\|_2^{-1}<1$,   AVE~\eqref{eq:ave} has a unique solution for any $b \in \mathbb{R}^{n}$.
\end{proposition}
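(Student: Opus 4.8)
The plan is to derive \Cref{pro:us} as a by-product of the convergence analysis to follow, which yields two genuinely constructive proofs; I present the FPI route first because it is the cleaner. Write $\nu=\|A^{-1}\|_2$; the hypothesis is $\nu<1$. Fix any $\tau\in(0,1]$ and define $G:\mathbb{R}^n\to\mathbb{R}^n$ by $G(y)=(1-\tau)y+\tau\,|A^{-1}(y+b)|$, so that the $y$-update of \eqref{eq:fpi} reads $y^{(k+1)}=G(y^{(k)})$. Using the entrywise nonexpansiveness of $|\cdot|$, i.e.\ $\|\,|u|-|v|\,\|_2\le\|u-v\|_2$, one gets
\begin{align*}
\|G(y)-G(y')\|_2
&\le|1-\tau|\,\|y-y'\|_2+\tau\,\|\,|A^{-1}(y+b)|-|A^{-1}(y'+b)|\,\|_2\\
&\le\big(|1-\tau|+\tau\nu\big)\,\|y-y'\|_2=\big(1-\tau(1-\nu)\big)\,\|y-y'\|_2,
\end{align*}
and since $\tau>0$ and $\nu<1$ the factor $1-\tau(1-\nu)$ is strictly less than $1$, so $G$ is a contraction on $\mathbb{R}^n$.

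Consequently $\{y^{(k)}\}$ is a Cauchy sequence, with $\|y^{(k+1)}-y^{(k)}\|_2\le(1-\tau(1-\nu))^k\|y^{(1)}-y^{(0)}\|_2$, and, $\mathbb{R}^n$ being complete, it converges — from any starting vector — to the unique fixed point $y_*$ of $G$; then $x^{(k+1)}=A^{-1}(y^{(k)}+b)\to A^{-1}(y_*+b)=:x_*$. From $y_*=G(y_*)=(1-\tau)y_*+\tau|x_*|$ and $\tau>0$ we obtain $y_*=|x_*|$, hence $Ax_*-|x_*|=b$, which proves existence. For uniqueness, if $x_1,x_2$ both solve \eqref{eq:ave} then $A(x_1-x_2)=|x_1|-|x_2|$, so
\[
\|x_1-x_2\|_2=\|A^{-1}(|x_1|-|x_2|)\|_2\le\nu\,\|\,|x_1|-|x_2|\,\|_2\le\nu\,\|x_1-x_2\|_2,
\]
and $\nu<1$ forces $x_1=x_2$. (With $\tau=1$ this is just the remark that $x\mapsto A^{-1}(|x|+b)$ is itself a contraction.)

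For the second proof I would instead treat \eqref{eq:sor} as a coupled iteration in $(x,y)$ and bound the modulus of its error map componentwise by the matrix $T_\nu(\omega)$ from the discussion around \eqref{eq:errsor}; showing that $T_\nu(\omega)$ is a contraction in a suitable (preferably $\omega$-independent) norm for some admissible $\omega$ then produces an AVE solution by passing to the limit exactly as above, with the same uniqueness argument. Here the real issue — and the main obstacle — is not the contraction estimate, which is a short singular-value-plus-triangle-inequality computation, but verifying that the admissible parameter set supplied by the new analysis is nonempty for the \emph{whole} range $\nu\in(0,1)$: the classical bound \eqref{eq:cond1} gives, near $\omega=1$, only $\nu<\sqrt2/2$, and \eqref{eq:cfpi} likewise requires $\nu<\sqrt2/2$, so the content of the sharpened analysis is precisely that this gap can be closed. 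For the FPI version this is already transparent above, since $\tau\in(0,1]$ works for every $\nu<1$; the only remaining point there is the trivial one that $G$ maps $\mathbb{R}^n$ into itself, which holds because $A$ is nonsingular.
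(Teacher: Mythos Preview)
Your FPI argument is correct and is essentially the content of the paper's \Cref{thm:fpi} (together with the remark following it), but you have streamlined it: the paper keeps the coupled pair $(x^{(k)},y^{(k)})$, majorises the error by the $2\times 2$ matrix $U$ of \eqref{eq:u}, and then invokes \Cref{lem:2.4}--\Cref{lem:2.3} to turn $\rho(U)<1$ into a Cauchy-sequence argument; you instead eliminate $x$, observe that the $y$-update alone is the self-map $G$, and read off a Lipschitz constant $|1-\tau|+\tau\nu$ directly --- which is exactly $\rho(U)$, so the two computations coincide, but yours bypasses the matrix/spectral-radius machinery and lands on the Banach fixed-point theorem in one line. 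Your uniqueness argument is likewise shorter than the paper's (which argues by contradiction through the $y$-variable and the inequality \eqref{eq:con2}); the direct estimate $\|x_1-x_2\|_2\le\nu\|x_1-x_2\|_2$ is of course the standard one. For the SOR route you only sketch the plan; the paper does carry it out in full (\Cref{thm:sor}), again via a $2\times 2$ majorant $W$ and the condition $\rho(W)<1$, and the nonemptiness of the parameter range for every $\nu\in(0,1)$ that you flag as the main issue is exactly what \eqref{eq:con-sor} supplies. Since your FPI proof already establishes \Cref{pro:us} for all $\nu<1$, the SOR sketch is not needed for correctness, only for the ``two constructive proofs'' claim.
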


Generally, the SOR-like iteration \eqref{eq:sor} and the FPI \eqref{eq:fpi} are different from each other. Surprisingly, our analysis below investigates that the SOR-like iteration \eqref{eq:sor} and the FPI \eqref{eq:fpi} are the same whenever they are equipped with our optimal iterative parameters. Our work makes the following key contributions:

\begin{enumerate}
  \item For the SOR-like iteration method, new convergence result and optimal iteration parameter are given. The new convergence range is larger than the existing one and the new optimal iteration parameter has a analytical form.

  \item For the FPI method, new convergence result is given. Unlike \cite{yuch2022}, we theoretically fill the convergence gap without modifying the original method. Furthermore, we obtain the optimal iterative parameter.

  \item We discover that the SOR-like iteration and and the FPI are the same when they are equipped with our optimal iterative parameters.
\end{enumerate}

The rest of this paper is organized as follows: In \Cref{sec:Preliminaries}, we present preliminary results and essential lemmas that serve as the foundation for our subsequent analysis. In \Cref{sec:SOR} and \Cref{sec:FPI}, we establishes broader convergence domains and derives explicit expressions for optimal iteration parameters of the SOR-like iteration and FPI, respectively. Numerical results are given in \Cref{sec:ne}. Finally, some concluding remarks are given in \Cref{sec:conclusions}.

\textbf{Notation.} Let $\mathbb{R}^{n\times n}$ be the set of all $n\times n$ real matrices and $\mathbb{R}^n=\mathbb{R}^{n\times 1}$. $|U|\in\mathbb{R}^{m\times n}$ denote the componentwise absolute value of the matrix $U$. $I$ denotes the identity matrix with suitable dimensions. $\|U\|_2$ denotes the $2$-norm of $U\in\mathbb{R}^{m\times n}$ which is defined by the formula $\|U\|_2=\max\{\|Ux\|_2:x\in\mathbb{R}^n,\|x\|_2=1\}$, where $\|x\|_2$ is the $2$-norm of the vector $x$. $\rho(U)$ denotes the spectral radius of $U$. For $A \in \mathbb{R}^{n\times n}$, $\det (A)$ denotes its determinant. The sign of a real $r$ is defined by ${\rm sign}(r)=1$ if $r> 0$, $0$ if $r=0$ and $-1$ if $r<0$. For $x\in \mathbb{R}^n$, ${\rm diag}(x)$ represents a diagonal matrix with $x_i$ as its diagonal entries for every $i = 1,2,\ldots,n$.

\section{Preliminaries}\label{sec:Preliminaries}
In this section, we collect some basic results that will be used later.

\begin{lemma}[{\cite[Lemma 2.1]{youn1971}}]\label{lem:2.1}
Let $p$ and $q$ be real coefficients. Then both roots of the quadratic equation $x^2 - px + q = 0$ are less than one in modulus if and only if $|q|<1$ and $|p|<1+q$.
\end{lemma}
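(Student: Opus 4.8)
The plan is to prove both directions of the equivalence by writing $g(x) := x^2 - px + q = (x - x_1)(x - x_2)$ and exploiting Vieta's relations $x_1 + x_2 = p$, $x_1 x_2 = q$, splitting into the case of two real roots and the case of a complex-conjugate pair. The two algebraic identities I would lean on throughout are $g(1) = 1 - p + q = (1 - x_1)(1 - x_2)$ and $g(-1) = 1 + p + q = (1 + x_1)(1 + x_2)$, whence $g(1)\,g(-1) = (1 - p + q)(1 + p + q) = (1 + q)^2 - p^2$.

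For necessity, assume $|x_1| < 1$ and $|x_2| < 1$. Then $|q| = |x_1|\,|x_2| < 1$; in particular $1 + q > 0$. If the roots are real they lie in $(-1,1)$, so the four numbers $1 \pm x_1$ and $1 \pm x_2$ are all positive and $g(1)\,g(-1) > 0$; if they are a conjugate pair, $g(1)\,g(-1) = |1 - x_1^2|^2 > 0$ because $x_1^2 \neq 1$. Either way $(1 + q)^2 - p^2 > 0$, and combined with $1 + q > 0$ this gives $|p| < 1 + q$. For sufficiency, assume $|q| < 1$ and $|p| < 1 + q$ (so in particular $1 + q > |p| \geq 0$). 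If $p^2 < 4q$ the roots are a conjugate pair with $|x_1|^2 = x_1 x_2 = q < 1$, hence both have modulus $\sqrt{q} < 1$. If $p^2 \geq 4q$ the roots are real, and since $g$ is an upward parabola both roots lie in $(-1,1)$ iff $g(1) > 0$, $g(-1) > 0$, and the vertex abscissa $p/2$ lies in $(-1,1)$; the first two are exactly $1 - p + q > 0$ and $1 + p + q > 0$, i.e.\ $|p| < 1 + q$, while $|p| < 1 + q < 2$ (using $q < 1$) gives $|p/2| < 1$. This completes the argument.

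I would also record the alternative one-shot route via the M\"obius map $x = (1 + w)/(1 - w)$, which sends the open unit disk bijectively onto the open left half-plane and turns $g(x) = 0$ into $(1 + p + q)\,w^2 + 2(1 - q)\,w + (1 - p + q) = 0$; the Routh--Hurwitz criterion for a real quadratic then yields the three sign conditions $1 + p + q > 0$, $1 - q > 0$, $1 - p + q > 0$, once again equivalent to $|q| < 1$ and $|p| < 1 + q$. The only delicate point in either approach --- and the one place where the strict inequality $|x_i| < 1$ genuinely enters --- is the treatment of the degenerate configurations: showing $g(1)\,g(-1)$ is \emph{strictly} positive (not merely nonnegative) when the roots are complex, and, in the M\"obius version, checking that a vanishing leading coefficient $1 + p + q = 0$ corresponds precisely to a root at $x = -1$ and is therefore correctly excluded. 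I expect that bookkeeping of strict versus non-strict inequalities, rather than any substantial new idea, to be the main obstacle.
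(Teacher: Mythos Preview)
Your argument is correct. The necessity direction is clean, and in the sufficiency direction your three-condition test for real roots in $(-1,1)$ (namely $g(1)>0$, $g(-1)>0$, and vertex in $(-1,1)$) is exactly what is needed; the observation $|p|<1+q<2$ handles the vertex condition. The alternative M\"obius/Routh--Hurwitz route is also valid, and your remarks on the strict-inequality bookkeeping are on point.

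There is nothing to compare against in the paper itself: the lemma is quoted from Young's monograph \cite{youn1971} and stated without proof, as a standard preliminary. Your write-up therefore supplies an argument the paper simply imports; either of your two approaches would be an acceptable self-contained justification.
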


\begin{lemma}[{e.g., \cite[Theorem~1.10]{saad2003}}]\label{lem:2.4}
For~$U\in\mathbb{R}^{n\times n}$,~$\lim\limits_{k\rightarrow+\infty} U^k=0$ if and only if~$\rho(U)<1$.
\end{lemma}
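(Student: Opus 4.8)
The plan is to prove the two implications separately, with eigenvalue arguments carrying most of the load. For the necessity direction, $\lim_{k\to+\infty} U^k = 0 \Rightarrow \rho(U) < 1$, I would argue through the action of $U$ on its eigenvectors. Let $\lambda \in \mathbb{C}$ be any eigenvalue of $U$ with associated eigenvector $v \ne 0$, so that $Uv = \lambda v$ and hence $U^k v = \lambda^k v$ for every $k$. Since $U^k \to 0$ entrywise, we have $U^k v \to 0$, whence $\lambda^k v \to 0$; as $v \ne 0$ this forces $\lambda^k \to 0$ and therefore $|\lambda| < 1$. Because this holds for every eigenvalue, $\rho(U) = \max_\lambda |\lambda| < 1$.

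For the sufficiency direction, $\rho(U) < 1 \Rightarrow \lim_{k\to+\infty} U^k = 0$, I would pass to the Jordan canonical form $U = P J P^{-1}$, so that $U^k = P J^k P^{-1}$ and it suffices to show $J^k \to 0$. Because $J$ is block diagonal, it is enough to treat a single Jordan block $J_\lambda = \lambda I + N$ of size $m$, where $N$ is the nilpotent shift satisfying $N^m = 0$. Since $\lambda I$ and $N$ commute, expanding $(\lambda I + N)^k$ by the binomial theorem yields a finite sum (only $m$ terms survive) whose entries are of the form ${k \choose j} \lambda^{k-j}$ for $0 \le j \le m - 1$. As $|\lambda| \le \rho(U) < 1$, each such entry is a fixed polynomial factor ${k \choose j}$ times $\lambda^{k-j}$, and exponential decay dominates polynomial growth, so ${k \choose j}|\lambda|^{k-j} \to 0$ (e.g.\ by the ratio test). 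Hence every entry of $J_\lambda^k$ tends to $0$, so $J^k \to 0$, and finally $U^k = P J^k P^{-1} \to 0$.

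The main obstacle to watch is the non-diagonalizable case: when $U$ has defective eigenvalues the naive bound $\|U^k\| \le \rho(U)^k$ fails, and one must control the polynomial factors ${k \choose j}$ produced by the nilpotent part, which the explicit binomial expansion above handles. An alternative route that sidesteps the Jordan form entirely is to invoke the standard fact that for every $\epsilon > 0$ there exists a vector norm whose induced matrix norm satisfies $\|U\| \le \rho(U) + \epsilon$; choosing $\epsilon$ with $\rho(U) + \epsilon < 1$ gives $\|U^k\| \le \|U\|^k \to 0$, and equivalence of norms on $\mathbb{R}^{n\times n}$ then yields $U^k \to 0$. Either route completes the equivalence, and since this is a classical fact I would ultimately defer to the cited reference \cite{saad2003}.
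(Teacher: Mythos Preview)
Your proof is correct and follows the standard textbook argument. However, the paper does not actually prove this lemma: it is stated as a preliminary result and simply cited from \cite[Theorem~1.10]{saad2003} without any argument given. You yourself note this at the end of your proposal when you say you would ``ultimately defer to the cited reference.'' So there is nothing to compare against; your write-up supplies a complete classical proof where the paper offers only a citation.
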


\begin{lemma}[{e.g., \cite[Theorem~1.11]{saad2003}}]\label{lem:2.3}
For~$U\in\mathbb{R}^{n\times n}$, the series~$\sum\limits_{k=0}^\infty U^k$ converges if and only if~$\rho(U)<1$ and we have~$\sum\limits_{k=0}^\infty U^k=(I-U)^{-1}$ whenever it converges.
\end{lemma}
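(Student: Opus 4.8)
The plan is to prove both directions of the equivalence together with the closed form for the sum by reducing everything to \Cref{lem:2.4}, which already characterizes $\lim_{k\to\infty}U^k=0$ in terms of the spectral radius. The key algebraic device is the telescoping identity
\begin{equation*}
(I-U)\sum_{j=0}^{k}U^j = I - U^{k+1},
\end{equation*}
valid for every $k$ and every $U$, which links the partial sums $S_k:=\sum_{j=0}^{k}U^j$ to the powers $U^{k+1}$ whose behavior is controlled by \Cref{lem:2.4}.

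First I would establish the necessity of $\rho(U)<1$. If the series $\sum_{k=0}^\infty U^k$ converges to some limit $S$, then its general term must tend to zero: writing $U^k=S_k-S_{k-1}$ for $k\ge 1$ and letting $k\to\infty$ gives $U^k\to S-S=0$. By \Cref{lem:2.4}, $U^k\to 0$ forces $\rho(U)<1$, which settles the ``only if'' direction.

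Conversely, suppose $\rho(U)<1$. Then $1$ is not an eigenvalue of $U$, so $\det(I-U)=\prod_i\bigl(1-\lambda_i(U)\bigr)\neq 0$ and $I-U$ is invertible. Multiplying the telescoping identity on the left by $(I-U)^{-1}$ yields $S_k=(I-U)^{-1}(I-U^{k+1})$. Since $\rho(U)<1$, \Cref{lem:2.4} gives $U^{k+1}\to 0$, and hence
\begin{equation*}
\lim_{k\to\infty}S_k=(I-U)^{-1}(I-0)=(I-U)^{-1}.
\end{equation*}
This simultaneously shows that the series converges (the ``if'' direction) and that its sum equals $(I-U)^{-1}$, completing the argument.

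The proof is almost entirely mechanical once \Cref{lem:2.4} is available; the one point that requires genuine care is the invertibility of $I-U$ under the hypothesis $\rho(U)<1$, and here the hard part will be to justify it via the eigenvalue/determinant argument above rather than circularly appealing to the very Neumann series we are trying to sum. A secondary subtlety is the matrix analogue of the elementary fact that the terms of a convergent series tend to zero, but this follows immediately from the difference representation $U^k=S_k-S_{k-1}$ of the summands and needs no extra machinery.
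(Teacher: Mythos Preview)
The paper does not supply its own proof of \Cref{lem:2.3}; the lemma is simply quoted from Saad's textbook, so there is no in-paper argument to compare against. Your proposal is the standard textbook proof and is correct as written: the telescoping identity $(I-U)S_k=I-U^{k+1}$ together with \Cref{lem:2.4} handles both the sufficiency and the closed form, and the $U^k=S_k-S_{k-1}\to 0$ observation gives necessity.
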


\section{New convergence and new optimal iterative parameter of SOR-like iteration}\label{sec:SOR}
In this section, we devote to giving new convergence analysis and deriving new optimal iterative parameter for the SOR-like iteration method.

\subsection{New convergence analysis}
In this subsection, we derive a new convergence condition for the SOR-like iteration method, which results a larger range of $\omega$ than that of \cite{chyh2024}. Concretely, we have the following theorem.

\begin{theorem}\label{thm:sor}
Let $A\in \mathbb{R}^{n\times n}$ be a nonsingular matrix and denote $\nu=\|A^{-1}\|_2$. If
\begin{equation}\label{eq:con-sor}
0<\nu<1 \quad \text{and}\quad 0<\omega<\frac{2 - 2\sqrt{\nu}}{1 - \nu},
\end{equation}
AVE \eqref{eq:ave} has a unique solution for any $b\in \mathbb{R}^n$ and the sequence~$\{(x^{(k)},y^{(k)})\}^\infty_{k=0}$ generated by~\eqref{eq:sor} globally linearly  converges to~$(x_{*}, y_{*}=|x_*|)$ with $x_{*}$ being the unique solution of AVE~\eqref{eq:ave}.
\end{theorem}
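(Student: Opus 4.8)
The plan is to track the error pair $(e_k^x,e_k^y)$ through one SOR-like step and reduce the convergence question to the spectral radius of a fixed $2\times 2$ nonnegative matrix. Starting from \eqref{eq:sor} and the fixed-point relations $x_* = (1-\omega)x_* + \omega A^{-1}(y_*+b)$, $y_* = (1-\omega)y_* + \omega|x_*|$, subtraction gives
\[
e_{k+1}^x = (1-\omega)e_k^x + \omega A^{-1}e_k^y, \qquad
e_{k+1}^y = (1-\omega)e_k^y + \omega\bigl(|x_*| - |x^{(k+1)}|\bigr).
\]
Using $\bigl\||x_*|-|x^{(k+1)}|\bigr\|_2 \le \|e_{k+1}^x\|_2$ and $\|A^{-1}\|_2=\nu$, I would pass to the norms $u_k = \|e_k^x\|_2$, $v_k=\|e_k^y\|_2$ and obtain the componentwise majorization
\[
\begin{bmatrix} u_{k+1}\\ v_{k+1}\end{bmatrix}
\le
\begin{bmatrix} |1-\omega| & \omega\nu \\ \omega|1-\omega| & |1-\omega| + \omega^2\nu \end{bmatrix}
\begin{bmatrix} u_k\\ v_k\end{bmatrix}
=: T_\nu(\omega)\begin{bmatrix} u_k\\ v_k\end{bmatrix},
\]
where the inequality is entrywise and $T_\nu(\omega)\ge 0$. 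Iterating, $(u_k,v_k)^\top \le T_\nu(\omega)^k (u_0,v_0)^\top$, so by \Cref{lem:2.4} it suffices to show $\rho(T_\nu(\omega)) < 1$ under \eqref{eq:con-sor}; this yields $(u_k,v_k)\to 0$, hence global linear convergence of $\{(x^{(k)},y^{(k)})\}$ to $(x_*,|x_*|)$.

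Next I would compute the characteristic polynomial of $T_\nu(\omega)$. With $a=|1-\omega|$ and (say) $d=\omega^2\nu$, the trace is $2a+d$ and the determinant is $a(a+d)-\omega^2\nu a = a^2$, so the characteristic equation is $\lambda^2 - (2a+d)\lambda + a^2 = 0$. Applying \Cref{lem:2.1} with $p = 2a+d$, $q = a^2$, the condition $\rho(T_\nu(\omega))<1$ is equivalent to $a^2 < 1$ and $2a + d < 1 + a^2$, i.e. $|1-\omega| < 1$ together with $\omega^2\nu < (1-|1-\omega|)^2$. The first is $0<\omega<2$. For the second, I split into $0<\omega\le 1$ (where $1-|1-\omega| = \omega$, giving $\omega^2\nu < \omega^2$, i.e. $\nu<1$, automatically true) and $1<\omega<2$ (where $1-|1-\omega| = 2-\omega$, giving $\omega^2\nu < (2-\omega)^2$, i.e. $\sqrt{\nu}\,\omega < 2-\omega$, i.e. $\omega < \frac{2}{1+\sqrt{\nu}} = \frac{2-2\sqrt{\nu}}{1-\nu}$). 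Combining the two cases, $\rho(T_\nu(\omega))<1$ holds exactly when $0<\nu<1$ and $0<\omega<\frac{2-2\sqrt{\nu}}{1-\nu}$ (noting $\frac{2-2\sqrt{\nu}}{1-\nu}>1$ for $\nu\in(0,1)$, so the bound genuinely covers the regime $\omega\le 1$ as well). This matches \eqref{eq:con-sor}.

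For the uniqueness claim, I would give a short self-contained argument rather than quoting \Cref{pro:us}: since $\nu = \|A^{-1}\|_2 < 1$ already implies AVE \eqref{eq:ave} has a unique solution for every $b$, one route is simply to invoke \Cref{pro:us} (as $\|A\|_2^{-1}\le \|A^{-1}\|_2 < 1$ need not hold in general — so instead I use $\nu<1$ directly); concretely, the map $x\mapsto A^{-1}(|x|+b)$ is a contraction in $\|\cdot\|_2$ with factor $\nu<1$, so Banach's fixed-point theorem gives existence and uniqueness of $x_*$, and then $y_*=|x_*|$. This also fits the paper's stated aim of giving "a new constructive proof" of the well-known sufficient condition. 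The main obstacle is purely the bookkeeping in the determinant computation — verifying that the $(1,2)$ and $(2,1)$ entries conspire to make $\det T_\nu(\omega) = a^2$ exactly (a cancellation of the $\omega^2\nu$ terms) — and then handling the case split $\omega\le1$ versus $\omega>1$ cleanly when simplifying $(1-|1-\omega|)^2$; everything else is a direct application of Lemmas \ref{lem:2.1} and \ref{lem:2.4}.
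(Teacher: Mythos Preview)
Your proposal is correct, and the spectral-radius computation for the $2\times 2$ matrix (characteristic polynomial $\lambda^2 - (2a+d)\lambda + a^2 = 0$, application of \Cref{lem:2.1}, and the case split on $\omega\lessgtr 1$) is exactly what the paper does; your determinant cancellation $\det T_\nu(\omega)=a^2$ is the same as the paper's $\det(\lambda I - W)$ calculation.

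Where you diverge is in the surrounding architecture. You first establish existence and uniqueness of $x_*$ via Banach's theorem applied to the Picard map $x\mapsto A^{-1}(|x|+b)$, and only then track the error pair $(e_k^x,e_k^y)$ against the known solution. The paper instead works with consecutive differences $(\|x^{(k+1)}-x^{(k)}\|_2,\|y^{(k+1)}-y^{(k)}\|_2)$, obtains the same majorizing matrix $W=T_\nu(\omega)$, and uses $\rho(W)<1$ together with \Cref{lem:2.3} to show $\{(x^{(k)},y^{(k)})\}$ is Cauchy; the limit is then shown to satisfy the AVE, and uniqueness is argued by contradiction from the same inequalities. Your route is shorter and avoids the Cauchy-sequence bookkeeping, but it outsources existence to a separate fixed-point iteration (Picard), whereas the paper's point is that the SOR-like iteration \emph{itself} furnishes the constructive existence proof of \Cref{pro:us}---this is precisely the content of the remark following the theorem. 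Both arguments are valid; yours is more economical, the paper's is more self-contained in the sense that no auxiliary iteration is invoked.
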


\begin{proof}
It follows from \eqref{eq:sor} that
\begin{eqnarray}\label{eq:sor'}
\begin{cases}
x^{(k)}=(1-\omega)x^{(k-1)}+\omega A^{-1}(y^{(k-1)}+b),\\
y^{(k)}=(1-\omega)y^{(k-1)}+\omega |x^{(k)}|.
\end{cases}
\end{eqnarray}
Subtracting~\eqref{eq:sor'} from~\eqref{eq:sor}, we have
\begin{eqnarray*}
\begin{cases}
x^{(k+1)}-x^{(k)}=(1-\omega)(x^{(k)}-x^{(k-1)})+\omega A^{-1}(y^{(k)}-y^{(k-1)}),\\
y^{(k+1)}-y^{(k)}=(1-\omega)(y^{(k)}-y^{(k-1)})+\omega (|x^{(k+1)}|-|x^{(k)}|),
\end{cases}
\end{eqnarray*}
from which and $\||x| - |y|\|_2 \le \|x - y\|_2$ that
\begin{eqnarray*}
\begin{cases}
\|x^{(k+1)}-x^{(k)}\|_2 \leq |1-\omega| \|x^{(k)}-x^{(k-1)}\|_2 +\omega \nu \|y^{(k)}-y^{(k-1)}\|_2,\\
\|y^{(k+1)}-y^{(k)}\|_2 \leq |1-\omega| \|y^{(k)}-y^{(k-1)}\|_2 +\omega \|x^{(k+1)}-x^{(k)}\|_2.
\end{cases}
\end{eqnarray*}
That is,
\begin{equation}\label{eq:sor*}
\begin{bmatrix}
  1 & 0 \\
  -\omega & 1
\end{bmatrix}
\begin{bmatrix}
  \|x^{(k+1)}-x^{(k)}\|_2 \\
  \|y^{(k+1)}-y^{(k)}\|_2
\end{bmatrix}
\leq
\begin{bmatrix}
  |1-\omega| & \omega\nu \\
  0 & |1-\omega|
\end{bmatrix}
\begin{bmatrix}
  \|x^{(k)}-x^{(k-1)}\|_2 \\
  \|y^{(k)}-y^{(k-1)}\|_2
\end{bmatrix}.
\end{equation}
Multiplying \eqref{eq:sor*} from left by the nonnegative matrix
$
\begin{bmatrix}
  1 & 0 \\
  \omega & 1
\end{bmatrix}
$,
we get
\begin{equation}\label{eq:W}
\begin{bmatrix}
  \|x^{(k+1)}-x^{(k)}\|_2 \\
  \|y^{(k+1)}-y^{(k)}\|_2
\end{bmatrix}
\leq W
\begin{bmatrix}
  \|x^{(k)}-x^{(k-1)}\|_2 \\
  \|y^{(k)}-y^{(k-1)}\|_2
\end{bmatrix}
\end{equation}
with
\begin{equation}\label{eq:w}
W=\begin{bmatrix}
    |1-\omega| & \omega\nu \\
    \omega |1-\omega| & \omega^2 \nu+|1-\omega|
  \end{bmatrix}\ge 0.
\end{equation}

For each $m \geq 1$, if $\rho(W)<1$, it follows from~\eqref{eq:W}, \eqref{eq:w}, \Cref{lem:2.4} and  \Cref{lem:2.3}  that
\begin{align*}
\left[\begin{array}{c}
 \|x^{(k+m)}-x^{(k)}\|_2 \\
 \|y^{(k+m)}-y^{(k)}\|_2
\end{array}\right]&=
\left[\begin{array}{c}
 \|\sum_{j=0}^{m-1}(x^{(k+j+1)}-x^{(k+j)})\|_2 \\
 \|\sum_{j=0}^{m-1}(y^{(k+j+1)}- y^{(k+j)})\|_2
\end{array}\right]
\leq
\sum_{j=0}^{m-1}
\left[\begin{array}{c}
 \|x^{(k+j+1)}-x^{(k+j)}\|_2 \\
 \|y^{(k+j+1)}- y^{(k+j)}\|_2
\end{array}\right]\nonumber\\
&\leq \sum_{j=0}^{\infty}W^{j+1}
\left[\begin{array}{c}
 \|x^{(k)}- x^{(k-1)}\|_2 \\
 \|y^{(k)}- y^{(k-1)}\|_2
\end{array}\right]
=(I-W)^{-1}W
\left[\begin{array}{c}
 \|x^{(k)}-x^{(k-1)}\|_2 \\
 \|y^{(k)}-y^{(k-1)}\|_2
\end{array}\right]\nonumber\\
&\leq (I-W)^{-1}W^k
\left[\begin{array}{c}
 \|x^{(1)}-x^{(0)}\|_2 \\
 \|y^{(1)}-y^{(0)}\|_2
\end{array}\right]
\rightarrow
\left[\begin{array}{c}
0\\
0
\end{array}\right]~~(\text{as}\quad k\rightarrow \infty).
\end{align*}
Hence, $\{x^{(k)}\}_{k=0}^{\infty}$ and~$\{y^{(k)}\}_{k=0}^{\infty}$ are Cauchy sequences and they are convergent in $\mathbb{R}^n$. Let $\lim\limits_{k\rightarrow\infty} x^{(k)} =x_{*}$ and $\lim\limits_{k\rightarrow\infty} y^{(k)} =y_{*}$, it follows from~\eqref{eq:sor} that
\begin{eqnarray*}
\begin{cases}
x_*=(1-\omega)x_*+\omega A^{-1}(y_*+b),\\
y_*=(1-\omega)y_*+\omega |x_*|,
\end{cases}
\end{eqnarray*}
from which and $\omega>0$ we have
\begin{eqnarray*}
\begin{cases}
Ax_{*}-y_*-b=0,\\
y_{*} = |x_*|.
\end{cases}
\end{eqnarray*}
Thus, $x_{*}$ is a solution to AVE~\eqref{eq:ave}.

Next, we turn to consider the conditions such that $\rho(W)<1$. Suppose that~$\lambda$ is an eigenvalue of~$W$, and then
\begin{eqnarray*}
\det (\lambda I-W)=\det\left(
\begin{bmatrix}
  \lambda-|1-\omega| & -\omega\nu \\
  -\omega|1-\omega| & \lambda-(\omega^2 \nu+|1-\omega|)
\end{bmatrix}
 \right)=0,
\end{eqnarray*}
from which we have
\begin{equation*}%\label{eq:eig}
\lambda^2-(\nu\omega^2  +2|1-\omega|)\lambda +(1-\omega)^2=0.
\end{equation*}
It follows from Lemma~\ref{lem:2.1} that $|\lambda|<1$ (i.e., $\rho(W)<1$) if and only if
\begin{align}
  (1-\omega)^2&<1, \label{eq:con1}\\
  \nu\omega^2  +2|1-\omega|&<1+(1-\omega)^2. \label{eq:con2}
\end{align}
Obviously, the inequality \eqref{eq:con1} holds if and only if $0<\omega<2$. Next, we will continue our discussion by dividing the following two cases.

\textbf{Case 1:} when $0< \omega \leq 1$, the inequality \eqref{eq:con2} becomes
      $$
      \nu\omega^2  +2(1-\omega)<1+(1-\omega)^2 \Leftrightarrow \omega^2 \nu<\omega^2,
      $$
      which holds if $0< \nu<1$.

\textbf{Case 2:} when $1< \omega <2$, the inequality \eqref{eq:con2} becomes
      $$
      \omega^2 \nu +2(\omega-1)<1+(1-\omega)^2 \Leftrightarrow (\nu-1)\omega^2+4\omega-4<0,
      $$
      which holds if $0< \nu< 1$ and $
  1<\omega<\frac{2-2\sqrt{\nu}}{1-\nu}<2.
  $

According to \textbf{Case 1} and \textbf{Case 2}, we can conclude that $\rho(W) < 1$ if \eqref{eq:con-sor} holds.

Finally, if \eqref{eq:con-sor} holds, we can prove the unique solvability of AVE~\eqref{eq:ave}. In contrast, suppose that $\bar{x}_{*}\neq x_*$ is another  solution to AVE~\eqref{eq:ave},  we have
\begin{numcases}{}
\|x_*-\bar{x}_*\|_2 \leq |1-\omega| \|x_*-\bar{x}_*\|_2 +\omega \nu \|y_*-\bar{y}_*\|_2 ,\label{eq:xb1}\\
\|y_*-\bar{y}_*\|_2 \leq|1-\omega| \|y_*-\bar{y}_*\|_2 +\omega \|x_*-\bar{x}_*\|_2,\label{eq:yb1}
\end{numcases}
where $y_{*}=|x_{*}|$ and $\bar{y}_{*}=|\bar{x}_{*}|$.  It follows from \eqref{eq:xb1} and \eqref{eq:yb1} that
\begin{align*}
\|y_*-\bar{y}_*\|_2
&\leq (|1-\omega|+\frac{\omega^2\nu}{1-|1-\omega|})\|y_*-\bar{y}_*\|_2\\
&=\frac{|1-\omega|-(1-\omega)^2+\omega^2\nu}{1-|1-\omega|}\|y_*-\bar{y}_*\|_2.
\end{align*}
Recall \eqref{eq:con2}, we get $\frac{|1-\omega|-(1-\omega)^2+\omega^2\nu}{1-|1-\omega|}<1$, and then
$$\|y_*-\bar{y}_*\|_2 <\|y_*-\bar{y}_*\|_2,$$
which is a contradiction.
\end{proof}

\begin{remark}
The condition \eqref{eq:con-sor} seems simpler than the condition \eqref{eq:cond1} proposed in \cite{kema2017}. The condition \eqref{eq:cond1} proposed in \cite{kema2017} is further investigated in \cite[Theorem 2.2]{chyh2024}. In addition, for given $\nu \in (0,1)$, the following \Cref{fig:sor} demonstrates that the range of $\omega$ determined by \eqref{eq:con-sor} is larger than that giving in \cite[Theorem 2.2]{chyh2024}.

\begin{figure}[htp]
  \centering
  \includegraphics[width=0.7\linewidth]{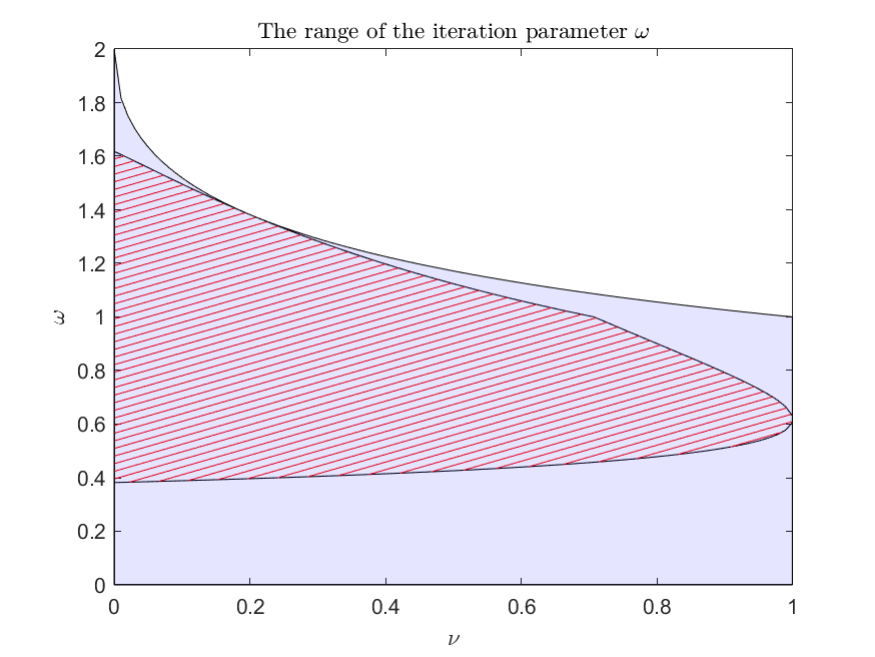}
  \caption{Comparison of convergence domains for the SOR-like method. The light blue area represents the range of $\omega$ obtained from \eqref{eq:con-sor}, and the red striped area represents the range of $\omega$ obtained from \cite[Theorem 2.2]{chyh2024}.}\label{fig:sor}
\end{figure}

\end{remark}

\begin{remark}
The proof of \Cref{thm:sor} can be seen as a new constructive proof of \Cref{pro:us}.
\end{remark}

\subsection{Optimal iterative parameter of SOR-like iteration}
Similar to the derivation of \eqref{eq:W}, we have
\begin{equation}\label{eq:err}
\begin{bmatrix}
  \|x^{(k+1)}-x_*\|_2 \\
  \|y^{(k+1)}-y_*\|_2
\end{bmatrix}
\leq W
\begin{bmatrix}
  \|x^{(k)}-x_*\|_2 \\
  \|y^{(k)}-y_*\|_2
\end{bmatrix}
\le \ldots \le W^{k+1}
\begin{bmatrix}
  \|x^{(0)}-x_*\|_2 \\
  \|y^{(0)}-y_*\|_2
\end{bmatrix}.
\end{equation}
In addition, the small value of $\rho(W)$ is, the faster $\{W^k\}$ will converge to zero later on (as $k\rightarrow +\infty$). Hence, it follows from \eqref{eq:err} that the small value of $\rho(W)$ is, the faster $\{x^{(k)}\}_{k=0}^{\infty}$ will converge to $x_*$ later on. In the following, for given $\nu \in (0,1)$, we will determine the optimal iterative parameter $\omega \in \left(0,\frac{2 - 2\sqrt{\nu}}{1 - \nu}\right)$ by minimizing $\rho(W)$.

Given $\nu \in (0,1)$, for $\omega \in \left(0,\frac{2 - 2\sqrt{\nu}}{1 - \nu}\right)$ we have
\begin{equation*}
\triangle=(\omega^2 \nu +2|1-\omega|)^2-4(1-\omega)^2 > 0,
\end{equation*}
which implies that
\begin{align*}
\rho(W)&=\frac{2|1-\omega|+\omega^2\nu+\sqrt{(2|1-\omega|+\omega^2\nu)^2-4(1-\omega)^2}}{2},\\
&=\frac{2|1-\omega|+\omega^2\nu+\omega\sqrt{4|1-\omega|\nu+\omega^2\nu^2}}{2}.
\end{align*}
Let
\begin{equation*}%\label{eq:g}
g_\nu(\omega)=2|1-\omega|+\omega^2\nu+\omega\sqrt{4|1-\omega|\nu+\omega^2\nu^2},
\end{equation*}
for given $\nu \in (0,1)$, the problem of finding the optimal iterative parameter is changing to find the minimum point of $g_\nu(\omega)$ in $\omega \in \left(0,\frac{2 - 2\sqrt{\nu}}{1 - \nu}\right)$. Then we have the following theorem.

\begin{theorem}\label{thm:op-sor}
Let $A\in \mathbb{R}^{n\times n}$ be a nonsingular matrix and let $\nu=\|A^{-1}\|_2$. Given $\nu \in (0,1)$, the optimal iterative parameter that minimizes $g_\nu(\omega)$ in $\left(0,\frac{2 - 2\sqrt{\nu}}{1 - \nu}\right)$ is $\omega=1$.
\end{theorem}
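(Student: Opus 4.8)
The plan is to prove directly that $g_\nu(\omega)\ge g_\nu(1)=2\nu$ for every $\omega\in\left(0,\frac{2-2\sqrt{\nu}}{1-\nu}\right)$, with equality only at $\omega=1$; since $\rho(W)=g_\nu(\omega)/2$ on this interval, this pins down $\omega=1$ as the unique minimizer. First I would record the value at the candidate point: at $\omega=1$ the term $|1-\omega|$ vanishes, so $g_\nu(1)=\nu+\sqrt{\nu^2}=2\nu$ (using $\nu>0$), and I would note that $g_\nu$ is continuous at $\omega=1$ despite the corner of $|1-\omega|$ there. The argument then splits at $\omega=1$, mirroring the two cases already used in the proof of \Cref{thm:sor}.

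For $0<\omega<1$ I would substitute $t=1-\omega\in(0,1)$ (so $|1-\omega|=t$ and $\omega=1-t$) and expand $g_\nu(\omega)-2\nu$. Collecting the non-radical terms gives $2t(1-\nu)-\nu(1-t)(1+t)$, hence
\[
g_\nu(\omega)-2\nu=2t(1-\nu)+(1-t)\left[\sqrt{\nu}\,\sqrt{4t+(1-t)^2\nu}-\nu(1+t)\right].
\]
The first summand is positive, and for the bracket I would compare the squares of the two nonnegative quantities: $\nu\bigl(4t+(1-t)^2\nu\bigr)$ against $\nu^2(1+t)^2$; dividing by $\nu$ and using $(1+t)^2-(1-t)^2=4t$, this reduces to $4t\ge 4\nu t$, which holds because $\nu<1$. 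Thus $g_\nu(\omega)>2\nu$ on $(0,1)$.

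For $1<\omega<\frac{2-2\sqrt{\nu}}{1-\nu}$ we have $|1-\omega|=\omega-1>0$, so $4(\omega-1)+\omega^2\nu>\omega^2\nu$ and therefore $\omega\sqrt{\nu}\,\sqrt{4(\omega-1)+\omega^2\nu}>\omega^2\nu$. Feeding this bound into $g_\nu$ yields $g_\nu(\omega)>2(\omega-1)+2\omega^2\nu$, and then
\[
2(\omega-1)+2\omega^2\nu-2\nu=2(\omega-1)\bigl(1+\nu(\omega+1)\bigr)>0,
\]
so $g_\nu(\omega)>2\nu$ here as well. Combining the two cases with $g_\nu(1)=2\nu$ shows that $\omega=1$ is the unique minimizer of $g_\nu$ on $\left(0,\frac{2-2\sqrt{\nu}}{1-\nu}\right)$.

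I expect the only delicate point to be the algebra in the case $\omega<1$: the rewriting $g_\nu(\omega)-2\nu=2t(1-\nu)+(1-t)[\,\cdots\,]$ must be arranged so that the radical pairs cleanly with $-\nu(1+t)$ times $(1-t)$, after which the comparison of squares collapses to the hypothesis $\nu<1$. The case $\omega>1$ is essentially immediate from a crude lower bound on the radical. An alternative would be to differentiate $g_\nu$ and examine its one-sided derivatives at the corner $\omega=1$, but the direct inequality route avoids the kink and is shorter.
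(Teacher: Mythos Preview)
Your proof is correct, but it takes a genuinely different route from the paper's.

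The paper proceeds by calculus: on $(0,1]$ it computes $g'_\nu$, $g''_\nu$ and $g'''_\nu$, observes $g'''_\nu<0$, hence $g''_\nu$ is decreasing; using $\lim_{\omega\to 0^+}g''_\nu(\omega)=2(\nu-\sqrt{\nu})<0$ it gets $g''_\nu<0$, hence $g'_\nu$ decreasing; then $\lim_{\omega\to 0^+}g'_\nu(\omega)=2(\sqrt{\nu}-1)<0$ forces $g'_\nu<0$, so $g_\nu$ is strictly decreasing on $(0,1]$. On the right piece $g'_\nu>0$ is read off directly. Continuity then pins the minimum at $\omega=1$.

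You instead prove the single pointwise inequality $g_\nu(\omega)>g_\nu(1)=2\nu$ on each piece by elementary algebra: on $(0,1)$ the substitution $t=1-\omega$ and the rewrite
\[
g_\nu(\omega)-2\nu=2t(1-\nu)+(1-t)\bigl[\sqrt{\nu}\sqrt{4t+(1-t)^2\nu}-\nu(1+t)\bigr]
\]
reduces the bracket to the square comparison $4t\ge 4\nu t$, and the first summand is already strictly positive; on $\bigl(1,\tfrac{2-2\sqrt{\nu}}{1-\nu}\bigr)$ the crude bound on the radical immediately gives $g_\nu(\omega)>2(\omega-1)+2\omega^2\nu>2\nu$.

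The paper's approach yields more information (strict monotonicity on each subinterval), at the price of computing and tracking signs of three successive derivatives. Your approach is shorter and calculus-free; the only work is spotting the algebraic decomposition in the $\omega<1$ case, after which the hypothesis $\nu<1$ enters in exactly the same place as in the paper.
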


\begin{proof}
Since
\begin{equation*}%\label{}
g_\nu(\omega)=
\begin{cases}
  2(1-\omega)+\omega^2\nu+\omega\sqrt{4(1-\omega)\nu+\omega^2\nu^2}, & \text{if}~0<\omega\leq1, \\
  2(\omega-1)+\omega^2\nu+\omega\sqrt{4(\omega-1)\nu+\omega^2\nu^2}, & \text{if}~1<\omega<\frac{-2+2\sqrt{\nu}}{\nu-1},
\end{cases}
\end{equation*}
we have
\begin{equation*}%\label{}
g^\prime_\nu(\omega)=
\begin{cases}
  -2+2\omega\nu+\frac{\omega\nu(-2+\omega\nu)}{\sqrt{\nu(4-4\omega+\omega^2\nu)}}+\sqrt{\nu(4-4\omega+\omega^2 \nu)}, & \mbox{if}~0<\omega\leq1, \\
  2+2\omega\nu+\frac{\omega\nu(2+\omega\nu)}{\sqrt{\nu(-4+4\omega+\omega^2\nu)}}+\sqrt{\nu(-4+4\omega+\omega^2\nu)}, & \mbox{if}~1<\omega<\frac{-2+2\sqrt{\nu}}{\nu-1}.
\end{cases}
\end{equation*}

When $0<\omega\leq1$, we have
\begin{equation*}
g''_\nu(\omega)=2\nu+\frac{-16\nu^2+12\omega\nu^2+12\omega\nu^3-12\omega^2\nu^3+2\omega^3\nu^4}
{(4\nu-4\nu\omega+\omega^2\nu^2)^{\frac{3}{2}}}
\end{equation*}
and
\begin{equation*}
g'''_\nu(\omega)=-\frac{24(\omega-2)(\nu-1)\sqrt{\nu(\omega^2\nu-4\omega+4)}}
{(\omega^2\nu-4\omega+4)^3}<0.
\end{equation*}
Hence, $g''_\nu$ is monotonically decreasing on the interval $(0, 1]$. Then $g''_\nu(\omega)<0$ with $\omega \in (0, 1]$ since $g''_\nu$ is continuous and $\lim\limits_{\omega\rightarrow 0^{+}} g''_\nu(\omega)=2(\nu-\sqrt{\nu}) < 0$. Thus, $g'_\nu $ is also monotonically decreasing on the interval $(0, 1]$. Similarly, $g'_\nu(\omega)<0$ with $\omega \in (0, 1]$ since $g'_\nu$ is continuous and $\lim\limits_{\omega\rightarrow 0^{+}} g'_\nu(\omega)=2(\sqrt{\nu}-1) < 0$. Hence, $g_\nu $ is monotonically decreasing on the interval $(0, 1]$.

When $1<\omega<\frac{-2+2\sqrt{\nu}}{\nu-1}$, we have $g'_\nu(\omega)>0$ and thus $g_\nu $ is monotonically increasing on the interval $\left(1,\frac{-2+2\sqrt{\nu}}{\nu-1}\right)$.

It follows from the above discussion and the continuity of $g_\nu $ that the minimum point of $g_\nu $ on the interval $\left(0,\frac{-2+2\sqrt{\nu}}{\nu-1}\right)$ is $\omega=1$.
\end{proof}

\begin{remark}
In \cite{chyh2024}, in a different sense, Chen et al. proposed the optimal iterative parameter of the SOR-like iteration of the form
\begin{equation}\label{eq:opt}
\omega^*_{opt}=\begin{cases}
                 \omega_{opt}, & \mbox{if }~\frac{1}{4}<\nu<1, \\
                 1, & \mbox{if}~0<\nu\leq \frac{1}{4},
               \end{cases}
\end{equation}
where $\omega_{opt}\in (0,1)$ is the root of
{\small\begin{align*}
g_{\nu}^1(\omega) &= 6(\omega-1)+8\nu^2\omega^3+2\nu(2\omega-3\omega^2)\\
&\qquad +\frac{[3\left( \omega -1 \right) ^{2}+2\,{\nu}^{2}{\omega}^{4}+2\,\nu{\omega
}^{2} \left( 1-\omega \right)][6(\omega-1)+8\nu^2\omega^3+2\nu(2\omega-3\omega^2)]
-8(\omega-1)^3}{\sqrt{[3\left( \omega -1 \right) ^{2}+2\,{\nu}^{2}{\omega}^{4}+2\,\nu{\omega
}^{2} \left( 1-\omega \right)]^2-4(\omega-1)^4}}.
\end{align*}}
The root of $g_{\nu}^1$ doesn't have a analytical form while it can be approximately calculated by the classical bisection method. Given $\nu\in(0,1)$, our new optimal iterative parameter has a analytical form.
\end{remark}

\section{New convergence and optimal iterative parameter of FPI method}\label{sec:FPI}
In this section, we present new convergence result of  FPI  for solving AVE \eqref{eq:ave} and determine its optimal iterative parameter.

\subsection{New convergence result of FPI}

Similar to the proof of \Cref{thm:sor}, we can obtain the following theorem. However, we remain the sketch of the proof here in order to determine the optimal iterative parameter of  FPI.

\begin{theorem}\label{thm:fpi}
Let $A\in \mathbb{R}^{n\times n}$ be a nonsingular matrix  and $\nu=\|A^{-1}\|_2$. If
\begin{equation}\label{eq:con-fpi}
0< \nu<1 \quad \text{and} \quad 0< \tau <\frac{2}{\nu+1},
\end{equation}
AVE \eqref{eq:ave} has a unique solution for any $b\in \mathbb{R}^n$ and the sequence~$\{(x^{(k)},y^{(k)})\}^\infty_{k=0}$ generated by~\eqref{eq:fpi} globally linearly converges to~$(x_{*}, y_{*}=|x_*|)$, where $|x_*|$ is the unique solution of AVE~\eqref{eq:ave}.
\end{theorem}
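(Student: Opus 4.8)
The plan is to mimic the proof of \Cref{thm:sor} exactly, replacing the SOR update by the FPI update \eqref{eq:fpi}. First I would write \eqref{eq:fpi} at step $k$ and at step $k-1$, subtract, and use the nonexpansiveness $\||u|-|v|\|_2\le\|u-v\|_2$ together with $\nu=\|A^{-1}\|_2$ to obtain the componentwise bound
\begin{equation*}
\begin{bmatrix}\|x^{(k+1)}-x^{(k)}\|_2\\ \|y^{(k+1)}-y^{(k)}\|_2\end{bmatrix}
\le \widehat W
\begin{bmatrix}\|x^{(k)}-x^{(k-1)}\|_2\\ \|y^{(k)}-y^{(k-1)}\|_2\end{bmatrix},\qquad
\widehat W=\begin{bmatrix}0 & \nu\\ 0 & \tau\nu+|1-\tau|\end{bmatrix}\ge 0.
\end{equation*}
Here the first row has a $0$ in the $(1,1)$ entry because $x^{(k+1)}$ depends only on $y^{(k)}$, not on $x^{(k)}$; no left-multiplication by a unit-lower-triangular matrix is needed since the FPI $x$-update is already explicit. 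Then, exactly as in \Cref{thm:sor}, provided $\rho(\widehat W)<1$ I would invoke \Cref{lem:2.4} and \Cref{lem:2.3} to telescope $\sum_{j\ge 0}\widehat W^{\,j+1}$, conclude that $\{x^{(k)}\}$ and $\{y^{(k)}\}$ are Cauchy, pass to the limit $(x_*,y_*)$ in \eqref{eq:fpi}, and read off $Ax_*-y_*-b=0$, $y_*=|x_*|$, so that $x_*$ solves AVE~\eqref{eq:ave}.

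The next step is to pin down when $\rho(\widehat W)<1$. Because $\widehat W$ is upper triangular, its eigenvalues are $0$ and $\tau\nu+|1-\tau|$, so $\rho(\widehat W)<1$ simply reduces to $\tau\nu+|1-\tau|<1$. Splitting on the sign of $1-\tau$: for $0<\tau\le 1$ this is $\tau\nu+1-\tau<1\iff \tau(\nu-1)<0$, which holds iff $\nu<1$; for $\tau>1$ it is $\tau\nu+\tau-1<1\iff \tau(\nu+1)<2\iff \tau<\tfrac{2}{\nu+1}$. Combining, $\rho(\widehat W)<1$ exactly when $0<\nu<1$ and $0<\tau<\tfrac{2}{\nu+1}$, which is \eqref{eq:con-fpi}. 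Global linear convergence then follows from the analogue of \eqref{eq:err}, i.e. $\big[\|x^{(k+1)}-x_*\|_2,\ \|y^{(k+1)}-y_*\|_2\big]^\top\le \widehat W^{\,k+1}\big[\|x^{(0)}-x_*\|_2,\ \|y^{(0)}-y_*\|_2\big]^\top\to 0$, with asymptotic rate governed by $\rho(\widehat W)$.

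Finally, uniqueness for arbitrary $b$ is handled as in \Cref{thm:sor}: if $\bar x_*$ were another solution with $\bar y_*=|\bar x_*|$, the same subtraction-and-bound argument gives $\|y_*-\bar y_*\|_2\le(\tau\nu+|1-\tau|)\|y_*-\bar y_*\|_2$ and, since $\tau\nu+|1-\tau|<1$ under \eqref{eq:con-fpi}, this forces $\|y_*-\bar y_*\|_2=0$, hence $|x_*|=|\bar x_*|$ and then (from $Ax=|x|+b$ and nonsingularity of $A$) $x_*=\bar x_*$. I do not expect a genuine obstacle here: the triangular structure of $\widehat W$ makes the spectral-radius analysis cleaner than in the SOR case, and everything else is a transcription of the earlier proof. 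The one point requiring a little care is to state explicitly which row of the error recursion loses its diagonal term, so that no bogus extra unit-lower-triangular multiplication is performed; since the paper says it keeps only a sketch "in order to determine the optimal iterative parameter," I would make sure the matrix $\widehat W$ and the scalar condition $\tau\nu+|1-\tau|<1$ are displayed prominently for reuse in the subsequent optimal-parameter discussion.
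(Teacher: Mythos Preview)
Your proposal is correct and follows essentially the same route as the paper: the matrix you call $\widehat W$ is exactly the paper's $U$ in \eqref{eq:u}, and your case split on $|1-\tau|$ supplies the ``some algebra'' the paper leaves implicit. One small wording point: the $y$-update still involves $x^{(k+1)}$, so a substitution (equivalent to the unit-lower-triangular left-multiplication) is in fact what produces the $(2,2)$ entry $\tau\nu+|1-\tau|$---but your displayed $\widehat W$ is already the correct post-substitution matrix, so nothing in the argument is affected.
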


\begin{proof}
Similar to the proof of \Cref{thm:sor}, we have
\begin{equation}\label{eq:U}
\begin{bmatrix}
  \|x^{(k+1)}-x^{(k)}\| \\
  \|y^{(k+1)}-y^{(k)}\|
\end{bmatrix}
\leq U
\begin{bmatrix}
  \|x^{(k)}-x^{(k-1)}\| \\
  \|y^{(k)}-y^{(k-1)}\|
\end{bmatrix}
\end{equation}
with
\begin{equation}\label{eq:u}
U=\begin{bmatrix}
    0 & \nu \\
    0 & \tau \nu+|1-\tau|
  \end{bmatrix}\ge 0.
\end{equation}
Then, the proof is completed if $\rho(U)<1$. By some algebra, $\rho(U) < 1$ if \eqref{eq:con-fpi} holds.
\end{proof}

\begin{remark}
\Cref{fig:fpi} illustrates the comparison of convergence domains for FPI, from which we see that our new result substantially extends the convergence domain of \eqref{eq:cfpi}. Moreover, we fill the gap mentioned in \Cref{sec:intro} without modifying the original FPI.

\begin{figure}[htp]
  \centering
  \includegraphics[width=0.7\linewidth]{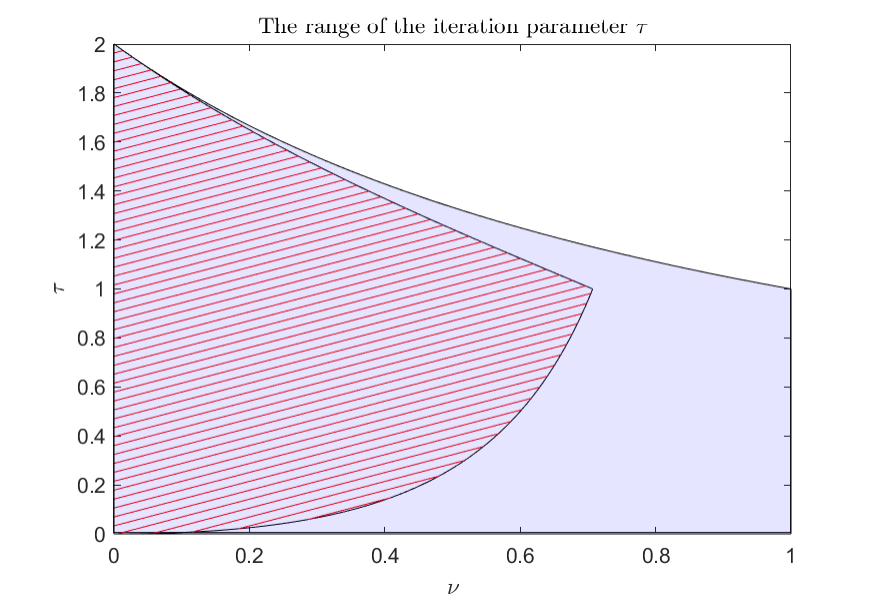}
  \caption{Comparison of convergence domains for the FPI method. The light blue area represents the range of $\tau$ obtained from \eqref{eq:con-fpi}, and the red striped area represents the range of $\tau$ obtained from \eqref{eq:cfpi}. }\label{fig:fpi}
\end{figure}
\end{remark}

\begin{remark}
The proof of \Cref{thm:fpi} can also be seen as a new constructive proof of \Cref{pro:us}.
\end{remark}

\subsection{Optimal iterative parameter of FPI method}
The optimal iterative parameter of FPI is lack in the literature. In this subsection, we will give the optimal iterative parameter which minimizes $\rho(U)$. Similar to the derivation of \eqref{eq:U}, we have
\begin{equation}\label{eq:errfpi}
\begin{bmatrix}
  \|x^{(k+1)}-x_*\|_2 \\
  \|y^{(k+1)}-y_*\|_2
\end{bmatrix}
\leq U
\begin{bmatrix}
  \|x^{(k)}-x_*\|_2 \\
  \|y^{(k)}-y_*\|_2
\end{bmatrix}
\le \ldots \le U^{k+1}
\begin{bmatrix}
  \|x^{(0)}-x_*\|_2 \\
  \|y^{(0)}-y_*\|_2
\end{bmatrix}.
\end{equation}
Hence, it follows from \eqref{eq:errfpi} that the small value of $\rho(U)$ is, the faster $\{x^{(k)}\}_{k=0}^\infty$ will converge to $x_*$ later on. In the following, for given $\nu \in (0,1)$, we will determine the optimal iterative parameter $\tau \in \left(0, \frac{2}{\nu+1}\right)$ that minimizes $\rho(U)$. Specially, we have the following theorem.

\begin{theorem}\label{thm:op-fpi}
Let $A\in \mathbb{R}^{n\times n}$ be a nonsingular matrix and $\nu=\|A^{-1}\|_2$. Given $\nu \in (0,1)$, the optimal iterative parameter that minimizes $\rho(U)$ in $\left(0, \frac{2}{\nu+1}\right)$ is $\tau=1$.
\end{theorem}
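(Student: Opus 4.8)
The plan is to analyze the spectral radius of the $2\times 2$ nonnegative matrix $U$ directly. From \eqref{eq:u}, $U$ is upper triangular with a zero in the $(1,1)$ and $(2,1)$ entries, so its eigenvalues are simply $0$ and $\tau\nu + |1-\tau|$. Hence $\rho(U) = \tau\nu + |1-\tau|$, and the task reduces to minimizing the scalar function $h_\nu(\tau) := \tau\nu + |1-\tau|$ over the interval $\left(0, \frac{2}{\nu+1}\right)$ for fixed $\nu \in (0,1)$.

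First I would split into the two cases dictated by the absolute value. For $0 < \tau \le 1$ we have $h_\nu(\tau) = \tau\nu + 1 - \tau = 1 - (1-\nu)\tau$, which is strictly decreasing in $\tau$ because $1-\nu > 0$; thus on $(0,1]$ the minimum is attained at $\tau = 1$ with value $h_\nu(1) = \nu$. For $1 < \tau < \frac{2}{\nu+1}$ we have $h_\nu(\tau) = \tau\nu + \tau - 1 = (1+\nu)\tau - 1$, which is strictly increasing in $\tau$ because $1+\nu > 0$; thus on that interval $h_\nu(\tau) > h_\nu(1) = \nu$. Combining the two cases and invoking continuity of $h_\nu$ at $\tau = 1$, the unique minimizer over $\left(0,\frac{2}{\nu+1}\right)$ is $\tau = 1$, giving $\rho(U) = \nu$. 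One should also note that $\frac{2}{\nu+1} > 1$ for $\nu \in (0,1)$, so $\tau = 1$ indeed lies in the admissible open interval.

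There is essentially no hard step here: unlike the SOR-like case in \Cref{thm:op-sor}, where $g_\nu(\omega)$ involves a square-root term requiring a third-derivative monotonicity argument, the FPI matrix $U$ has a trivial spectrum because its first column vanishes, so $\rho(U)$ is already a piecewise-linear function of $\tau$. The only things to be careful about are: (i) confirming the eigenvalue computation for the triangular matrix $U$ (immediate from \eqref{eq:u}); (ii) checking that the breakpoint $\tau=1$ of the absolute value is interior to the feasible interval; and (iii) handling the two linear pieces with the correct signs of their slopes, which hinge precisely on $0 < \nu < 1$. If anything could be called the ``main obstacle,'' it is merely remembering to argue that the value $\nu$ at $\tau=1$ is genuinely the infimum and is attained, rather than an endpoint limit — but since $h_\nu$ is continuous and the two one-sided pieces both exceed $\nu$ away from $1$, this is immediate.
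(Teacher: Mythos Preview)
Your proposal is correct and follows essentially the same approach as the paper: both identify $\rho(U)=\tau\nu+|1-\tau|$, split at $\tau=1$, and observe that the two linear pieces have slopes $\nu-1<0$ and $\nu+1>0$, forcing the minimum at $\tau=1$. Your write-up is in fact a bit more explicit than the paper's, since you justify the eigenvalue computation via the triangular structure of $U$ and verify that $\tau=1$ lies in the admissible interval.
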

\begin{proof}
From \eqref{eq:u}, for given $\nu \in (0,1)$, let
\begin{equation}\label{eq:g}
g_\nu(\tau) = \tau\nu+ |1-\tau|.
\end{equation}

When $0< \tau \leq 1$, \eqref{eq:g} becomes
\begin{equation*}
g_\nu(\tau)=\tau\nu+1-\tau.
\end{equation*}
Then, $g^\prime_\nu(\tau)=\nu-1< 0$. Hence, $g_\nu$ is a monotonically decreasing function in the interval $(0,1]$. When $1<\tau<\frac{2}{\nu+1}$, \eqref{eq:g} becomes
\begin{equation*}
g_\nu(\tau)=\tau\nu+\tau-1,
\end{equation*}
and then we get $g^\prime_\nu(\tau)=\nu+1 > 0$.  Hence, $g_\nu$ is a monotonically decreasing function in the interval $\left(1, \frac{2}{\nu+1}\right)$. In conclusion, in the interval $\left(0, \frac{2}{\nu+1}\right)$, the continuous function $g_\nu$ obtains its minimum value at $\tau = 1$.
\end{proof}

\begin{remark}
An interesting finding is that the SOR-like iteration \eqref{eq:sor} and the FPI \eqref{eq:fpi} are the same when they are equipped with our optimal iterative parameters $\omega = 1$ and $\tau = 1$, respectively.
\end{remark}

\section{Numerical experiments}\label{sec:ne}
In this section, we will present two numerical examples to illustrate the superior performance of the SOR-like iteration method with our optimal iteration parameter ``$\omega_{\rm nopt} = 1$'' (denoted by ``SORLnopt'') for solving AVE \eqref{eq:ave}. We compare the performance of SORLo, SORLopt, SORLaopt, SORLno algorithms mentioned in \cite{chyh2024} and the FPIno algorithm (the FPI method \cite{ke2020} with the numerically optimal iterative parameter ``$\tau_{{\rm no}}$'', which is selected from $\tau = [0.001 : 0.001 : 1.999]$ and is the
first one to reach the minimal number of iteration of the method) with the SORLnopt algorithm.

%, moreover, we can conclude that the determination of the optimal parameters depends on the choice of the metric

Note that in all these algorithms, the main task per iteration is solving a system of linear equations. In this paper, the tested methods are implemented in conjunction
with the Cholesky factorization since the coefficient matrix is symmetric positive definite. Specifically, we use $dA = \textbf{\text{decomposition}}(A, \text{`chol'})$ to generate the Cholesky decomposition of $A$, where `decomposition' is the routine in MATLAB, which returns the corresponding decomposition of a matrix $A$ that can be used to solve the linear
system $Ax = b$ efficiently. The call $x = d A \backslash b$ returns the same vector as $A\backslash b$, but is typically faster.

In the numerical results, we will report ``IT" (the number of iteration), ``CPU" (the elapsed CPU time in seconds), and ``RES" (the relative residual error). RES is defined by
$$
\operatorname{RES} =\frac{ \left\| A x^{(k)} - \left| x^{(k)} \right| - b \right\|}{\|b\|}.
$$

All tests are started from the initial zero vectors and terminated if the current iteration satisfies $\operatorname{RES} \leq 10^{-8}$ or the number of prescribed maximal iteration steps $k_{\max} = 100$ is exceeded (denoted by ``-"). We denote ``Range1'' as the range of $\omega$ for the SOR-like iteration determined by (2.24)-(2.26) in \cite{chyh2024}, ``Range2'' as the range of $\omega$ for the SOR-like iteration determined by \eqref{eq:con-sor}, ``Range3'' as the range of $\tau$ for  FPI determined by \eqref{eq:cfpi}, and  ``Range4'' as the range of $\tau$ for FPI determined by \eqref{eq:con-fpi}. In order to obtain more accurate CPU time, we run all test problems in each method five times and take the average. All computations are done in MATLAB R2021a on a personal computer with IntelCore(TM) i7 CPU 2.60 GHz, 16.0GB memory.

\begin{example}\label{exam:5.1}
Consider AVE \eqref{eq:ave} with
\begin{equation*}
A = {\rm Tridiag}(-I_m,S_m,-I_m)=
\begin{bmatrix}
  S_m & -I_m & 0 & \ldots & 0 & 0 \\
  -I_m & S_m & -I_m & \ldots & 0 & 0 \\
  0 & -I_m & S_m & \ldots & 0 & 0 \\
  \vdots & \vdots & \vdots & \ddots & \vdots & \vdots \\
  0 & 0 & 0 & \ldots & S_m & -I_m \\
  0 & 0 & 0 & \ldots & -I_m & S_m
\end{bmatrix}\in\mathbb{R}^{n\times n},
\end{equation*}

\begin{equation*}
S_m={\rm tridiag}(-1,8,-1)=
\begin{bmatrix}
 8 & -1 & 0 & \ldots & 0 & 0 \\
  -1 & 8 & -1 & \ldots & 0 & 0 \\
  0 & -1 & 8 & \ldots & 0 & 0 \\
  \vdots & \vdots & \vdots & \ddots & \vdots & \vdots \\
  0 & 0 & 0 & \ldots & 8 & -1 \\
  0 & 0 & 0 & \ldots & -1 & 8
\end{bmatrix}\in\mathbb{R}^{m\times m}
\end{equation*}
and $b=Ax^*-|x^*|$, where $x^*=[-1,1,-1,1,\cdots,-1,1]^\top\in\mathbb{R}^n$.
Here,we have $n=m^2$.

The parameters for this example are displayed in Table \ref{t1}, from which we can find that$~\omega_{\rm o},~\omega_{{\rm no}},~\omega_{{\rm aopt}},~\omega_{{\rm opt}},~\omega_{{\rm nopt}} \in {\rm Range1} \subset {\rm Range2}$ and $\omega_{{\rm nopt}}, \tau_{\rm no} \in {\rm Range3} \subset {\rm Range4}$. In addition, the larger the value of $\nu$ is, the smaller the range of $\omega$ or $\tau$ is. Numerical results for this example are reported in Table \ref{t2}. From Table \ref{t2}, we find SORLopt, SORLno, SORLnopt and FPIno take the same number of iteration, but SORLnopt is better than others in terms of CPU time. SORLaopt performs the worst in terms of IT, SORLopt always performs better than SORLo and SORLaopt in terms of IT and CPU.

\begin{table}[htp]
\centering
\caption{Parameters for Example~\ref{exam:5.1}}\label{t1}
\begin{tabular}{lllll}
\hline
                                      &$m$ &   &  &  \\ \cline{2-5}
                                      &8                          &16                            &32                          &64\\ \hline
$\nu$                             &0.2358                 &0.2458                      &0.2489                   &0.2497\\
$\omega_{\rm o}$          &1.0671                  &1.0704                     &1.0714                   &1.0717\\
$\omega_{{\rm no}}$      &0.9810                  &0.9830                     &0.9840                   &0.9850\\
$\omega_{{\rm aopt}}$   &0.8354                  &0.8305                     &0.8290                   &0.8286\\
$\omega_{{\rm opt}}$     &1                           &1                              &1                            &1\\
$\tau_{{\rm no}}$           &0.9610                   &0.9660                    &0.9680                   &0.9690 \\
{\rm Range1}                 &(0.3994, 1.3447)   &(0.4003, 1.3347)     &(0.4005, 1.3316)    &(0.4006, 1.3308)\\
{\rm Range2}                &(0, 1.3463)             &(0, 1.3371)              &(0, 1.3343)             &(0, 1.3336) \\
{\rm Range3}                &(0.0369, 1.5956)    &(0.0407, 1.5808)     &(0.0419, 1.5762)    &(0.0422, 1.5750)\\
{\rm Range4}                &(0, 1.6184)             &(0, 1.6054)              &(0, 1.6014)             &(0, 1.6004) \\\hline
\end{tabular}
\end{table}

\begin{table}[htp]
\centering
\caption{Numerical results for Example~\ref{exam:5.1}}\label{t2}
\begin{tabular}{llllll}
\hline
 Method &                  &$m$ &  &  &  \\ \cline{2-6}
  &                             & 8                 & 16                 & 32                 & 64 \\  \hline
 SORLo        &IT       & 16                &16                 &17                  &17                     \\
                    &CPU   &0.0381          &0.0413          &0.0495           &0.1502               \\
                    &RES   &4.9538e-09   &8.5231e-09   &3.3922e-09    &3.6490e-09         \\
 SORLaopt   &IT       &18                 &19                 &19                  &19                      \\
                    &CPU   &0.0365          &0.0402          &0.0493           &0.1463                \\
                   &RES   &8.6677e-09   &4.9002e-09   &5.6512e-09    &5.9552e-09         \\
 SORLopt     &IT       &11                 &11                 &11                  &11                       \\
                    &CPU   &0.0354          &0.0374          &0.0447           &0.1408                \\
                    &RES   &2.6003e-09   &3.2109e-09   &3.5117e-09    &3.6612e-09        \\
 SORLno      &IT       &11                 &11                 &11                  &11                      \\
                    &CPU   &1.0504          &1.7228         &4.4550            &18.1472              \\
                    &RES   &9.5801e-09   &9.8268e-09   &9.8327e-09    &9.4146e-09        \\
 SORLnopt   &IT        &11                 &11                 &11                  &11                     \\
                   &CPU    &0.0088          &0.0103          &0.0102           &0.0143          \\
                   &RES    &2.6003e-09   &3.2109e-09   &3.5117e-09    &3.6612e-09       \\
 FPIno        &IT        &11                 &11                 &11                  &11                     \\
                   &CPU    &0.8285          &1.3509          &3.5003           &14.4657          \\
                   &RES    &9.9594e-09   &9.7292e-09   &9.7047e-09    &9.6682e-09       \\ \hline
\end{tabular}
\end{table}
\end{example}

\begin{example}\label{exam:5.2}
Consider AVE \eqref{eq:ave} with the matrix $A\in\mathbb{R}^{n\times n}$  arises from six different test problems listed in Table \ref{t3}. There matrices are sparse and symmetric positive definite and $\|A^{-1}\|<1$. In addition, let $b=Ax^*-|x^*|$ with $x^*=[-1,1,-1,1,\cdots,-1,1]^\top\in\mathbb{R}^n$.

\begin{table}[htp]
  \centering
  \caption{Problems for \Cref{exam:5.2}}\label{t3}
  \begin{tabular}{lllll}
    \hline
    Problem & $n$ & Problem & $n$ \\\hline
    mesh1e1 & 48 & Trefethen\_{20b} & 19 \\
    mesh1em1 & 48 & Trefethen\_{200b} & 199 \\
    mesh2e1 & 306 & Trefethen\_{20000b} & 19999 \\
    \hline
  \end{tabular}
\end{table}

The parameters for this \Cref{exam:5.2} are displayed in Table \ref{t4}. It follows from Table \ref{t4} that the range of $\omega$ or $\tau$ becomes smaller as the value of $\nu$ becomes larger. Furthermore, all values of $\omega_{{\rm no}},~\omega_{\rm aopt},~\omega_{\rm opt},~\omega_{\rm nopt}\in {\rm Range1},~\omega_{{\rm nopt}}, \tau_{\rm no}\in {\rm Range4}$, while $\omega_{\rm o}$ does not belong to Range1 or Range2 for the former three test problems and it belongs to Range1 for the later three test problems. Moreover, for the test problem mesh2e1, $\tau_{\rm no}$ does not belong to Range3, because $\nu=0.7615>\frac{\sqrt{2}}{2}$ lead to ${\rm Range3}=\emptyset$, while it belongs to Range4. In addition, we can find that~${\rm Range1} \subset {\rm Range2}$ and ${\rm Range3} \subset {\rm Range4}$.

\begin{table}[htp]
\setlength{\tabcolsep}{1pt}
\small
\centering
\caption{Parameters for Example~\ref{exam:5.2}}\label{t4}
\begin{tabular}{lllllll}
 \hline
                                    &Problem              &   & & & &   \\  \cline{2-7}
                                   &mesh1e1              &mesh1em1            &mesh2e1              &Trefethen\_20b      &Trefethen\_200b    &Trefethen\_20000b\\\hline
$\nu$                         &0.5747                  &0.6397                  &  0.7615                &0.4244                  &0.4265                  &0.4268\\
$\omega_{\rm o}$       &1.2105                 &1.2498                 &1.3438                   &1.1372                  &1.1381                  &1.1382\\
$\omega_{\rm no}$     &0.9430                 &0.9290                  &0.9320                  &0.9300                  &0.9510                  &0.9990\\
$\omega_{\rm aopt}$  &0.7102                 &0.6929                   &0.6641                 &0.7569                  &0.7561                  &0.7561\\
$\omega_{\rm opt}$    &0.8218                 &0.7848                  &0.7210                  &0.9114                  &0.9102                  &0.9101 \\
$\tau_{\rm no}$          &0.8920                 &0.8350                  &0.8710                  &0.8690                  &0.8410                  &0.9700\\
{\rm Range1}              &(0.4361, 1.0753)  &(0.4460, 1.0367)   &(0.4692, 0.9413)   &(0.4175, 1.1785)  &(0.4177, 1.1769)   &(0.4177, 1.1767)\\
{\rm Range2}              &(0, 1.1376)           &(0, 1.1112)             &(0, 1.0680)           &(0, 1.2110)            &(0, 1.2099)            &(0, 1.2097)\\
{\rm Range3}              &(0.4271, 1.1547)  &(0.6422, 1.0786)   &$\emptyset $        &(0.1642, 1.3377)   &(0.1665, 1.3351)   &(0.1669, 1.3347)\\
{\rm Range4}              &(0, 1.2701)           &(0, 1.2197)            &(0, 1.1354)           &(0, 1.4041)            &(0, 1.4020)             &(0, 1.4017)\\\hline
\end{tabular}
\end{table}

Table \ref{t5} reports the numerical results for Example \ref{exam:5.2}. From Table \ref{t5}, we can see that for the test problems mesh1e1, mesh1em1, mesh2e1, Trefethen\_20b and Trefethen\_200b, SORLnopt performs better than others in terms of CPU, even though it does not have the fewest number of iterations. For the test problem Trefethen\_20000b, SORLnopt performs the best in terms of IT and CPU. In addition, for problems mesh1em1, Trefethen\_20b and Trefethen\_200b, we can conclude that the number of iteration of SORLopt is less than that of SORLnopt and reverse for other problems. SORLopt is better than SORLo and SORLaopt in terms of IT and CPU. FPIno and SORLno have the longest CPU times compared to other algorithms, and they show the same number of iterations except in problems mesh1em1 and Trefethen\_200b. In summary, for different problems, the optimal parameters of the same algorithm under different metrics may have different performance.

\begin{table}[h]
\setlength{\tabcolsep}{1.5pt}
\centering
\caption{Numerical results for Example~\ref{exam:5.2}}\label{t5}
\begin{tabular}{llllllll}
\hline
  &                           &Method &  &  &  &  \\ \cline{3-8}
 Problem &                                        & SORLo         & SORLaopt    & SORLopt      & SORLno        & SORLnopt      &FPIno          \\  \hline
 mesh1e1                &IT       & --                  &35                 &27                  &19                  &26                   &19                 \\
                               &CPU   & --                  &0.0387          &0.0367           &1.1828           &0.0100           &1.0529           \\
                               &RES   & --                  &7.3969e-09   &5.9299e-09    &9.8415e-09    &8.9665e-09     &9.9665e-09    \\
 mesh1em1            &IT        & --                  &33                 &27                  &18                  &33                   &19                  \\
                               &CPU   & --                  &0.0381          &0.0373           &1.1812           &0.0097            &1.0739           \\
                               &RES   & --                  &8.5320e-09   &5.9776e-09    &9.8672e-09    &8.2026e-09      &9.9313e-09    \\
 mesh2e1                 &IT      & --                  &35                  &31                  &18                  &24                   &18                 \\
                                &CPU  & --                  &0.0289           &0.0280           &2.2952           &0.0103            &2.0373            \\
                                &RES  & --                  &9.0933e-09    &7.3981e-09    &9.6759e-09    &9.3366e-09    &9.9450e-09      \\
 Trefethen\_20b        &IT      &49                 &20                 &13                  &12                  &15                   &12                    \\
                               &CPU   &0.0291          &0.0272          &0.0254           &0.9417           &0.0095            &0.7934             \\
                               &RES   &9.2171e-09   &6.3617e-09   &7.0832e-09    &9.5933e-09    &9.0253e-09     &9.9970e-09      \\
 Trefethen\_200b      &IT      &36                 &14                 &10                  &8                    &11                    &9                   \\
                                &CPU  &0.0478          &0.0435          &0.0423           &3.0857           &0.0112             &2.5563          \\
                                &RES  &8.6534e-09   &9.3612e-09   &3.7711e-09    &9.7946e-09    &7.8160e-09      &9.9102e-09          \\
 Trefethen\_20000b  &IT      &10                 &14                 &8                    &3                   &3                      &3           \\
                                &CPU  &7.9540          &8.2715          &7.8463           &4255.4442      &5.5824            &5416.5100            \\
                                &RES  &6.5615e-09   &2.6424e-09   &4.2785e-09    &7.9680e-09     &8.0323e-09     &9.7652e-09           \\   \hline
\end{tabular}
\end{table}
\end{example}

\section{Conclusions}\label{sec:conclusions}
In this paper, we provide a new convergence analysis of the SOR-like iteration and the FPI method, which result wider convergence range. Based on the new analysis, we obtain the new optimal iterative parameters for both the SOR-like iteration and the FPI, respectively. And we find that with our optimal iterative parameters $\omega=1$ and $\tau=1$, the SOR-like iteration and the FPI are consistent. Two numerical examples are given to demonstrate the superior performance of the SOR-like iteration method (and the FPI method) with our optimal iteration parameters.

\section*{Acknowledgments}
C. Chen was partially supported by the Fujian Alliance of Mathematics
(No. 2023SXLMQN03) and the Natural Science
Foundation of Fujian Province (No. 2021J01661). D. Han was partially supported by the Ministry of Science and Technology of China (No. 2021YFA1003600) and the National Natural Science
Foundation of China (Nos. 12126603 and 12131004).

\bibliographystyle{plain}
\bibliography{sorfpi4ave}

\begin{thebibliography}{10}

\bibitem{abhm2018}
L.~Abdallah, M.~Haddou, and T.~Migot.
\newblock Solving absolute value equation using complementarity and smoothing
  functions.
\newblock {\em J. Comput. Appl. Math.}, 327:196--207, 2018.

\bibitem{alct2023}
J.-H. Alcantara, J.-S. Chen, and M.-K. Tam.
\newblock Method of alternating projections for the general absolute value
  equation.
\newblock {\em J. Fixed Point Theory Appl.}, 25:39, 2023.

\bibitem{chyh2024}
C.-R. Chen, B.~Huang, D.-M. Yu, and D.-R. Han.
\newblock Optimal parameter of the {SOR}-like iteration method for solving
  absolute value equations.
\newblock {\em Numer. Algorithms}, 96:799--826, 2024.

\bibitem{cyyh2021}
C.-R. Chen, Y.-N. Yang, D.-M. Yu, and D.-R. Han.
\newblock An inverse-free dynamical system for solving the absolute value
  equations.
\newblock {\em Appl. Numer. Math.}, 168:170--181, 2021.

\bibitem{chyh2023}
C.-R. Chen, D.-M. Yu, and D.-R. Han.
\newblock Exact and inexact {D}ouglas-{R}achford splitting methods for solving
  large-scale sparse absolute value equations.
\newblock {\em IMA J. Numer. Anal.}, 43:1036--1060, 2023.

\bibitem{bcfp2016}
J.~Y.~Bello Cruz, O.~P. Ferreira, and L.~F. Prudente.
\newblock On the global convergence of the inexact semi-smooth {N}ewton method
  for absolute value equation.
\newblock {\em Comput. Optim. Appl.}, 65:93--108, 2016.

\bibitem{dnhk2023}
T.~M. Dang, T.~D. Nguyen, T.~Hoang, H.~Kim, A.~B.~J. Teoh, and D.~Choi.
\newblock {AVET}: A novel transform function to improve cancellable biometrics
  security.
\newblock {\em IEEE Trans. Inf. Foren. Sec.}, 18:758--772, 2023.

\bibitem{edhs2017}
V.~Edalatpour, D.~Hezari, and D.~K. Salkuyeh.
\newblock A generalization of the {G}auss-{S}eidel iteration method for solving
  absolute value equations.
\newblock {\em Appl. Math. Comput.}, 293:156--167, 2017.

\bibitem{hlad2018}
M.~Hlad\'{\i}k.
\newblock Bounds for the solutions of absolute value equations.
\newblock {\em Comput. Optim. Appl.}, 69:243--266, 2018.

\bibitem{huhu2010}
S.-L. Hu and Z.-H. Huang.
\newblock A note on absolute value equations.
\newblock {\em Optim. Lett.}, 4:417--424, 2010.

\bibitem{ke2020}
Y.-F. Ke.
\newblock The new iteration algorithm for absolute value equation.
\newblock {\em Appl. Math. Lett.}, 99:105990, 2020.

\bibitem{kema2017}
Y.-F. Ke and C.-F. Ma.
\newblock {SOR}-like iteration method for solving absolute value equations.
\newblock {\em Appl. Math. Comput.}, 311:195--202, 2017.

\bibitem{lild2022}
X.~Li, Y.-X. Li, and Y.~Dou.
\newblock Shift-splitting fixed point iteration method for solving generalized
  absolute value equations.
\newblock {\em Numer. Algorithms}, 93:695--710, 2023.

\bibitem{lilw2018}
Y.-Y. Lian, C.-X. Li, and S.-L. Wu.
\newblock Weaker convergent results of the generalized {N}ewton method for the
  generalized absolute value equations.
\newblock {\em J. Comput. Appl. Math.}, 338:221--226, 2018.

\bibitem{love2013}
T.~Lotfi and H.~Veiseh.
\newblock A note on unique solvability of the absolute value equation.
\newblock {\em J. Linear Topol. Algebra}, 2:77--81, 2013.

\bibitem{mang2007b}
O.~L. Mangasarian.
\newblock Absolute value equation solution via concave minimization.
\newblock {\em Optim. Lett.}, 1:3--8, 2007.

\bibitem{mang2007a}
O.~L. Mangasarian.
\newblock Absolute value programming.
\newblock {\em Comput. Optim. Appl.}, 36:43--53, 2007.

\bibitem{mang2009a}
O.~L. Mangasarian.
\newblock A generalized {N}ewton method for absolute value equations.
\newblock {\em Optim. Lett.}, 3:101--108, 2009.

\bibitem{mang2014}
O.~L. Mangasarian.
\newblock Linear complementarity as absolute value equation solution.
\newblock {\em Optim. Lett.}, 8:1529--1534, 2014.

\bibitem{mame2006}
O.~L. Mangasarian and R.~R. Meyer.
\newblock Absolute value equations.
\newblock {\em Linear Algebra Appl.}, 419:359--367, 2006.

\bibitem{maer2018}
A.~Mansoori and M.~Erfanian.
\newblock A dynamic model to solve the absolute value equations.
\newblock {\em J. Comput. Appl. Math.}, 333:28--35, 2018.

\bibitem{mezz2020}
F.~Mezzadri.
\newblock On the solution of general absolute value equations.
\newblock {\em Appl. Math. Lett.}, 107:106462, 2020.

\bibitem{prok2009}
O.~Prokopyev.
\newblock On equivalent reformulations for absolute value equations.
\newblock {\em Comput. Optim. Appl.}, 44:363--372, 2009.

\bibitem{rohn1989}
J.~Rohn.
\newblock Systems of linear interval equations.
\newblock {\em Linear Algebra Appl.}, 126:39--78, 1989.

\bibitem{rohn2004}
J.~Rohn.
\newblock A theorem of the alternatives for the equation ${A}x + {B}|x| = b$.
\newblock {\em Linear Multilinear Algebra}, 52:421--426, 2004.

\bibitem{rohn2009a}
J.~Rohn.
\newblock On unique solvability of the absolute value equation.
\newblock {\em Optim. Lett.}, 3:603--606, 2009.

\bibitem{rohf2014}
J.~Rohn, V.~Hooshyarbakhsh, and R.~Farhadsefat.
\newblock An iterative method for solving absolute value equations and
  sufficient conditions for unique solvability.
\newblock {\em Optim. Lett.}, 8:35--44, 2014.

\bibitem{saad2003}
Y.~Saad.
\newblock {\em Iterative Methods for Sparse Linear Systems (Second Edition)}.
\newblock SIAM, Philadelphia, 2003.

\bibitem{sayc2018}
B.~Saheya, C.-H. Yu, and J.-S. Chen.
\newblock Numerical comparisons based on four smoothing functions for absolute
  value equation.
\newblock {\em J. Appl. Math. Comput.}, 56:131--149, 2018.

\bibitem{soso2023}
J.~Song and Y.-Z. Song.
\newblock Relaxed-based matrix splitting methods for solving absolute value
  equations.
\newblock {\em Comput. Appl. Math.}, 42:19, 2023.

\bibitem{tazh2019}
J.-Y. Tang and J.-C. Zhou.
\newblock A quadratically convergent descent method for the absolute value
  equation ${A}x + {B}|x| = b$.
\newblock {\em Oper. Res. Lett.}, 47:229--234, 2019.

\bibitem{wacc2019}
A.~Wang, Y.~Cao, and J.-X. Chen.
\newblock Modified {N}ewton-type iteration methods for generalized absolute
  value equations.
\newblock {\em J. Optim. Theory Appl.}, 181:216--230, 2019.

\bibitem{wush2021}
S.-L. Wu and S.-Q. Shen.
\newblock On the unique solution of the generalized absolute value equation.
\newblock {\em Optim. Lett.}, 15:2017--2024, 2021.

\bibitem{xiqh2024}
J.-X. Xie, H.-D. Qi, and D.-R. Han.
\newblock Randomized iterative methods for generalized absolute value
  equations: Solvability and error bounds.
\newblock {\em arXiv preprint}, 2024.
\newblock \href{https://arxiv.org/abs/2405.04091}{arXiv:2405.04091}.

\bibitem{youn1971}
D.~M. Young.
\newblock {\em Iterative Solution of Large Linear Systems}.
\newblock Academic Press, 1971.

\bibitem{yuch2022}
D.-M. Yu, C.-R. Chen, and D.-R. Han.
\newblock A modified fixed point iteration method for solving the system of
  absolute value equations.
\newblock {\em Optimization}, 71:449--461, 2022.

\bibitem{yzch2024}
D.-M. Yu, G.-H. Zhang, C.-R. Chen, and D.-R. Han.
\newblock The neural network models with delays for solving absolute value
  equations.
\newblock {\em Neurocomputing}, 589:127707, 2024.

\bibitem{zahl2021}
M.~Zamani and M.~Hlad\'{\i}k.
\newblock A new concave minimization algorithm for the absolute value equation
  solution.
\newblock {\em Optim. Lett.}, 15:2241--2254, 2021.

\end{thebibliography}

\end{document}